\documentclass[11pt]{article}
\usepackage{fancyhdr}
\pagestyle{fancy}
\fancyhead{} 
\fancyhead[LE,RO] {\thepage}
\chead{Infinitely many exotic monotone Lagrangian tori in $\CP ^2$}

\cfoot{}

\usepackage{amsthm}
\usepackage{amsmath}
\usepackage{amssymb}
\usepackage{amstext}
\usepackage{graphicx} 
\usepackage{psfrag}
\usepackage{cite}
\usepackage{pstricks}

 \definecolor{lightgray}{gray}{.75}
\usepackage{booktabs}

\DeclareGraphicsRule{jpg}{*[]}{}{`jpeg2eps #1.jpg}

\usepackage[small,nohug,heads=vee]{diagrams}
\diagramstyle[labelstyle=\scriptstyle]

\usepackage{float}
\usepackage{graphics,graphicx} 






\usepackage[backref,colorlinks=true,linkcolor=blue,citecolor=red,urlcolor=blue,citebordercolor={0 0 1},urlbordercolor={0 0 1},linkbordercolor={0 0 1}]{hyperref} 

\newtheorem{thm}{Theorem}[section]
\newtheorem{theorem}[thm]{Theorem}
\newtheorem*{theorem*}{Theorem}
\newtheorem{definition}[thm]{Definition}
\newtheorem{proposition}[thm]{Proposition}
\newtheorem{lemma}[thm]{Lemma}
\newtheorem{claim}[thm]{Claim}
\newtheorem{corollary}[thm]{Corollary}

\newtheorem{remark}[thm]{Remark}
\numberwithin{equation}{section}
\newcommand{\CP}{\mathbb{CP}}
\newcommand{\D}{\mathbb{D}}
\newcommand{\CX}{\mathbb{C}}
\newcommand{\Z}{\mathbb{Z}}
\newcommand{\R}{\mathbb{R}}

\newcommand{\A}{\mathcal{A}}

\newcommand{\Lag}{\mathcal{L}}

\newcommand{\bu}{\textbf{u}}
\newcommand{\bv}{\textbf{v}}
\newcommand{\bw}{\textbf{w}}

\newcommand{\uu}{u_+^\infty}
\newcommand{\del}{\partial}
\newcommand{\kl}{(k^2,kl - 1)}
\newcommand{\abc}{(a^2,b^2,c^2)}

\begin{document}
\thispagestyle{empty}
\begin{LARGE}
\begin{center}
\textbf{Infinitely many exotic monotone Lagrangian tori in $\CP ^2$}
\end{center}

 \end{LARGE}

\begin{large}

\begin{center}
\textbf{Renato Ferreira de Velloso VIANNA} 
\end{center}
\end{large}

\begin{center}
  
\abstract  Related to each degeneration from $\CP^2$ to
$\CP(a^2,b^2,c^2)$, for $(a,b,c)$ a Markov triple - see \eqref{Markov} - there
is a monotone Lagrangian torus, which we call $T(a^2,b^2,c^2)$. We employ
techniques from symplectic field theory to prove that no two of them are
Hamiltonian isotopic to each other.
 
 \end{center}

\tableofcontents

\newpage

\section{Introduction}
In \cite{RV}, we explicitly constructed a monotone
Lagrangian torus in $\CP^2$, which we named $T(1,4,25)$. Moreover, we computed
the number of Maslov index 2 discs bounded by $T(1,4,25)$, to prove it is not
Hamiltonian isotopic to the known Clifford and Chekanov tori.

An almost toric fibration is a singular Lagrangian torus fibration allowing
nodal (pinched torus) and elliptic (circles or points) singularities; see
Definition 2.9 of \cite{RV}. The $T(1,4,25)$ Lagrangian torus can be seen as the
`central' fiber of a particular almost toric fibration of $\CP^2$. This almost
toric fibration can be obtained from the standard toric fibration of $\CP^2$ by
a series of operations called \emph{nodal trades} and \emph{nodal slides} that
don't change the symplectic four manifold - see Definitions 2.12, 2.13 of
\cite{RV}. Nodal trade replaces a corner (corank 2 elliptic singularity) by a
nodal fiber in the interior of the fibration with a cut that encodes the
monodromy around the nodal fiber. Nodal slides amount to lengthening and
shortening the cut. The base diagram for the almost toric fibration containing
the $T(1,4,25)$ monotone Lagrangian torus can be arranged to look similar to the
base for the standard toric fibration of the orbifold weighted projective space
$\CP(1,4,25)$, but with nodal fibers and cuts replacing the orbifold points -
see Figure \ref{figIntro}. Performing nodal slides that shorten all the cuts to
a limit point, pushing the nodes all the way to the boundary, corresponds to a
degeneration from $\CP^2$ to the weighted projective space $\CP(1,4,25)$.
Following the degeneration, $T(1,4,25)$ goes to the `central' fiber of the
standard base diagram of $\CP(1,4,25)$.

The projective plane admits degenerations to weighted projective spaces
$\CP(a^2,b^2,c^2)$, where $(a,b,c)$ is a Markov triple, i. e., satisfies the
Markov equation: 

\begin{equation} \label{Markov}
a^2 + b^2 + c^2 =3abc.
\end{equation}

For each $\CP(a^2,b^2,c^2)$, one can associate a monotone Lagrangian torus, $T\abc$, in
$\CP^2$ in either of the following ways: 

\begin{itemize}
 
 \item[-] by following the necessary nodal trade, nodal slide and
          \emph{transferring the cut} - see Definition \ref{def_transcut}
          - operations until we get to a base diagram that is about to
          degenerate to the base of the moment map for the standard torus action
          on $\CP(a^2,b^2,c^2)$ and considering the monotone fiber - see section
          \ref{abc}, Proposition \ref{transcut};
            
\item[-] by performing three \emph{rational blowdown} surgeries on $\CP\abc$ -
         see section 10 of \cite{MS} - that replace a small neighbourhood of
         each point mapping to the vertex of the moment polytope of $\CP\abc$,
         having a lens space of the form $L\kl$ as its boundary by a rational
         ball having the same boundary - see Figure \ref{figRatBD} - and
         considering the monotone fiber. 
         
\end{itemize}

\begin{figure}[h]
  \begin{center}
\centerline{\includegraphics[scale=0.7]{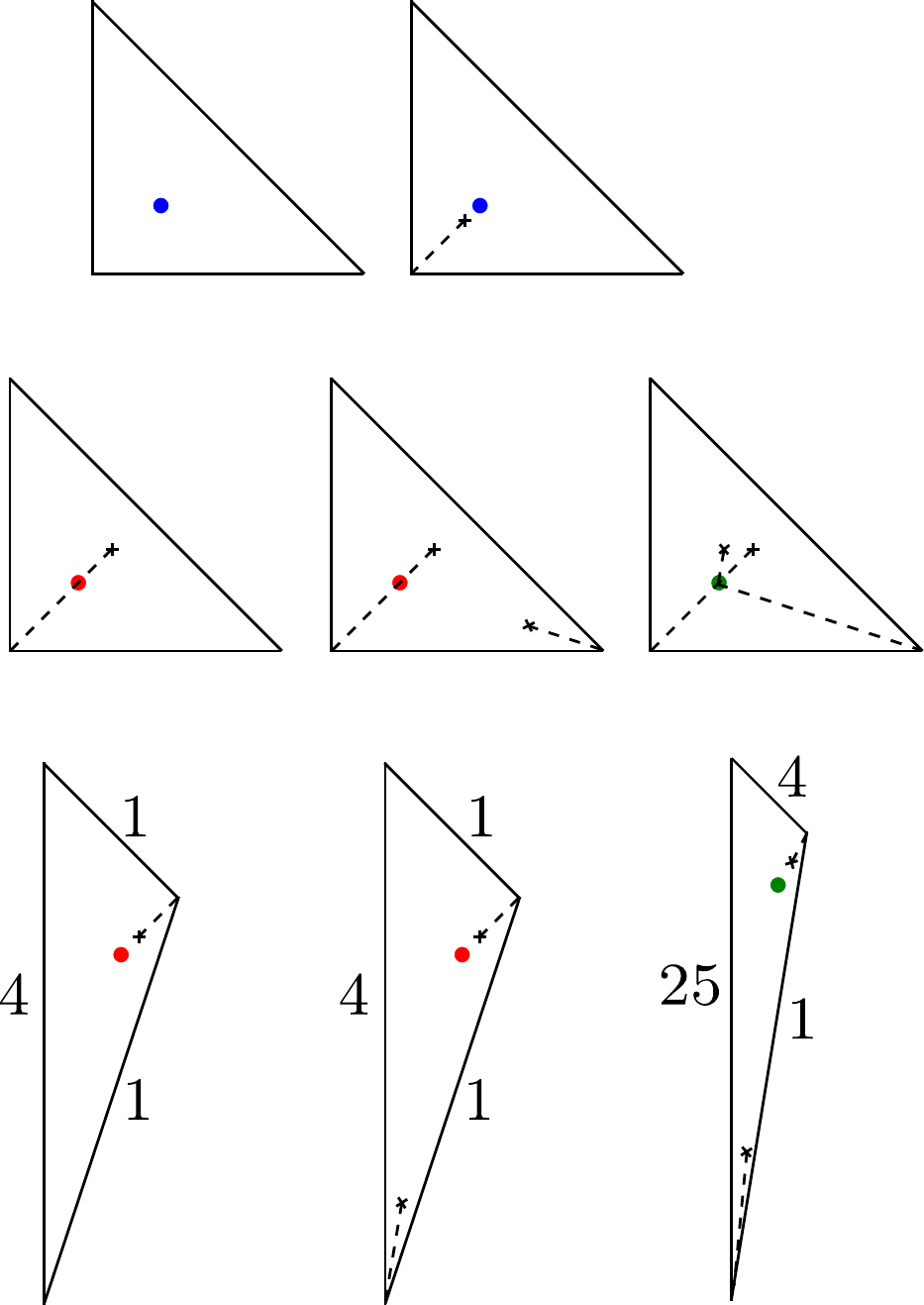}} 
 \caption{The procedure for going from the Clifford
 torus on the top left base diagram, to the Chekanov torus (third base 
 diagram) and to the $T(1,4,25)$ torus (fifth base diagram) by applying nodal
trades and nodal slides - see \cite{RV,MS,NLMS} for definitions. The dots represent the 
image of the monotone tori in the base diagrams. 
Each of the bottom diagrams is equivalent to the one right
 above it since they are related by transferring the cut operations
 - see Definition \ref{def_transcut}.}
\label{figIntro}
\end{center}
\end{figure}

 We will prove:

\begin{theorem} \label{mainthm}
  
If $(a,b,c)$ and $(d,e,f)$ are two distinct Markov triples then the monotone
Lagrangian tori $T(a^2, b^2, c^2)$ and $T(d^2, e^2, f^2)$ are not Hamiltonian
isotopic.
  
\end{theorem}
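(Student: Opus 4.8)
\bigskip
\noindent\emph{Proof strategy.}
The plan is to attach to every monotone Lagrangian torus $L\subset\CP^2$ a combinatorial invariant --- the Newton polytope of its Landau--Ginzburg superpotential, taken up to the action of $GL(2,\Z)$ --- and to show that this invariant already separates the tori $T\abc$. Fix a spin structure on $L$ and an identification $H_1(L;\Z)\cong\Z^2$. Since $\CP^2$ is monotone and the minimal Maslov number of $L$ equals $2$, for a generic compatible almost complex structure $J$ and a generic $p\in L$ the moduli of Maslov index $2$ $J$-holomorphic discs through $p$ are finite and regular; write $n_\beta(L)\in\Z$ for the signed count in the class $\beta\in H_2(\CP^2,L)$ and set $W_L=\sum_{\mu(\beta)=2}n_\beta(L)\,z^{\del\beta}$, a Laurent polynomial in $z=(z_1,z_2)$. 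Standard arguments in the style of Auroux, Fukaya--Oh--Ohta--Ono and Biran--Cornea show that $W_L$ is independent of the generic data, and that a Hamiltonian isotopy carrying $L$ to $L'$ matches $W_{L'}$ with $W_L$ up to the $GL(2,\Z)$ change of variables induced on $H_1$ (and sign changes from the spin structure, which do not affect the support). Consequently the lattice polytope $\mathrm{Newt}(W_L)\subset\R^2$, taken up to $GL(2,\Z)$, is a Hamiltonian isotopy invariant of $L$.

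The heart of the matter is to compute $\mathrm{Newt}(W_{T\abc})$; I claim it is $GL(2,\Z)$-equivalent to the \emph{Markov polygon} $Q(a,b,c):=\mathrm{conv}\{v_0,v_1,v_2\}$, where $v_0,v_1,v_2\in\Z^2$ are the primitive generators of the rays of the fan of the toric orbifold $\CP\abc$, so that $a^2v_0+b^2v_1+c^2v_2=0$. I would realise $T\abc$ as the monotone fibre of the almost toric fibration of Section~\ref{abc} (see also \cite{RV}), whose base --- after the nodal slides --- is arbitrarily close to the moment polygon of $\CP\abc$ with its three corners replaced by nodal fibres and cuts. From this picture one reads off the ``visible'' Maslov index $2$ discs --- those projecting to edges of the base together with those sweeping across the cuts --- whose boundary classes are exactly the lattice points of $Q(a,b,c)$. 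The essential point, where symplectic field theory enters, is that $W_{T\abc}$ is supported on $Q(a,b,c)$: there are no further Maslov index $2$ discs. I would stretch the neck along the contact-type hypersurface isolating the three ``singular'' regions of the fibration --- after the degeneration to $\CP\abc$ these are neighbourhoods of the three orbifold vertices, with lens-space boundaries $L\kl$, equivalently the three rational balls glued in by the rational-blowdown description of the introduction --- so that any Maslov index $2$ disc converges to a holomorphic building split between the complement and the symplectization pieces over the $L\kl$. Monotonicity forces each piece to carry a small, quantised amount of energy; together with the total index being $2$ and the incidence at a generic point, an enumeration of the admissible limit buildings shows that only the expected discs occur, so $\mathrm{Newt}(W_{T\abc})=Q(a,b,c)$ up to $GL(2,\Z)$. (Alternatively one can induct along the Markov tree, relating $W_{T\abc}$ to the superpotential of the adjacent smaller triple by an explicit wall-crossing/mutation substitution, with base case the Clifford torus $T(1,1,1)$ of superpotential $z_1+z_2+(z_1z_2)^{-1}$, and using SFT only to certify that crossing the wall contributes exactly the mutation formula.)

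It then remains to separate the polygons, which is elementary. Every Markov triple has $\gcd(a,b,c)=1$ --- indeed $a,b,c$ are pairwise coprime --- so the three primitive vertices $v_0,v_1,v_2$ of $Q(a,b,c)$ satisfy a unique (up to scale) linear relation, which because $\gcd(a,b,c)=1$ is precisely $a^2v_0+b^2v_1+c^2v_2=0$; hence the unordered triple $\{a^2,b^2,c^2\}$, and therefore the Markov triple $\{a,b,c\}$, is recovered from the $GL(2,\Z)$-equivalence class of $Q(a,b,c)$. (As a crude numerical shadow, $\mathrm{Newt}(W_{T\abc})$ has normalised lattice area $a^2+b^2+c^2=3abc$.) Thus distinct Markov triples give $GL(2,\Z)$-inequivalent Newton polytopes, so by the invariance of the first step $T(a^2,b^2,c^2)$ and $T(d^2,e^2,f^2)$ are not Hamiltonian isotopic.

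The crux is of course the vanishing statement in the second step: that $T\abc$ bounds no unexpected Maslov index $2$ discs. Making it rigorous demands SFT compactness for holomorphic buildings in the degenerate target --- which has lens-space (orbifold) ends, so one needs the relative, and orbifold, versions of the compactness and gluing package --- together with careful index and energy bookkeeping to control the combinatorics of the possible limit configurations, and the usual transversality and coherent-orientation care for the disc moduli underlying the superpotential in the first step. By comparison, the invariance of the superpotential and the number-theoretic separation are routine.
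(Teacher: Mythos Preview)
Your overall architecture matches the paper's: define the Newton polytope of the superpotential (what the paper calls the \emph{boundary Maslov-2 convex hull}, Definition~\ref{def_BdryConvexHull}) as a $GL(2,\Z)$-invariant of the monotone torus, compute it for $T\abc$ via neck-stretching along the three lens-space boundaries of the rational balls, and then separate the resulting triangles combinatorially. The separation step is fine; the paper phrases it via the affine lengths $a,b,c$ of the edges of $Q(a,b,c)$, but your linear-relation argument is equivalent.

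The substantive divergence, and the gap, is in the ``crux'' step. You propose to constrain the support of $W_{T\abc}$ by an enumeration of admissible limit buildings using ``quantised energy'' and index bookkeeping. The paper does \emph{not} do this, and it is not clear your route would close: the pieces living in $M_-^\infty$ and in the symplectization layers are not a priori controlled by monotonicity of $T\abc$, and classifying all buildings of total index $2$ over three lens-space ends is exactly the kind of combinatorial explosion one wants to avoid. The paper's key idea bypasses this entirely. After stretching, one looks only at the top level $u_+^\infty$ in $M_+^\infty$, observes (Lemma~\ref{lemM_+^infty}) that $(M_+^\infty,J_+^\infty)\cong\CP\abc\setminus\{p_1,p_2,p_3\}$ as a K\"ahler manifold, and \emph{compactifies} the map to an orbifold holomorphic curve in $\CP\abc$. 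Then Chen's orbifold positivity of intersection with the three toric divisors $D_1,D_2,D_3$ forces the class to lie in the cone generated by $[\alpha],[\beta],[\gamma]$, hence the boundary lies in $Q(a,b,c)$ (Lemma~\ref{lemma_ConvexHull}). No building enumeration, no index or energy accounting in the lower levels, is needed.

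Two smaller points. First, you assert that the ``visible'' discs realise \emph{all} lattice points of $Q(a,b,c)$; the paper neither needs nor proves this. It only needs the three \emph{vertex} classes, which are the Cho--Poddar discs (Lemma~\ref{lem_ChoPod_Cor}), and it proves their algebraic count is $\pm1$ by a second neck-stretch plus positivity-of-intersection argument (Lemma~\ref{lem_algcount_alpbetgam}). Second, your alternative inductive route via wall-crossing/mutation along the Markov tree is precisely what the paper's introduction flags as not yet rigorous: the wall-crossing formulas of \cite{GaUs} are conjectural, which is why the SFT argument is required in the first place.
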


In \cite{RV}, we gave an explicit description of $T(1,4,25)$. We first predicted
the number of Maslov index 2 discs each $T(1,4,25)$ bounds, by applying
wall-crossing mutations to the superpotential, as described by Galkin and Usnich
in \cite{GaUs} - see also sections 2.4 and 3 of \cite{RV}. But unfortunately
wallcrossing formulas are not proved to hold yet. That forced us to directly
compute all the Maslov index 2 holomorphic discs $T(1,4,25)$ bounds.

\begin{figure}[h!]
  \begin{center}
\centerline{\includegraphics[scale=1]{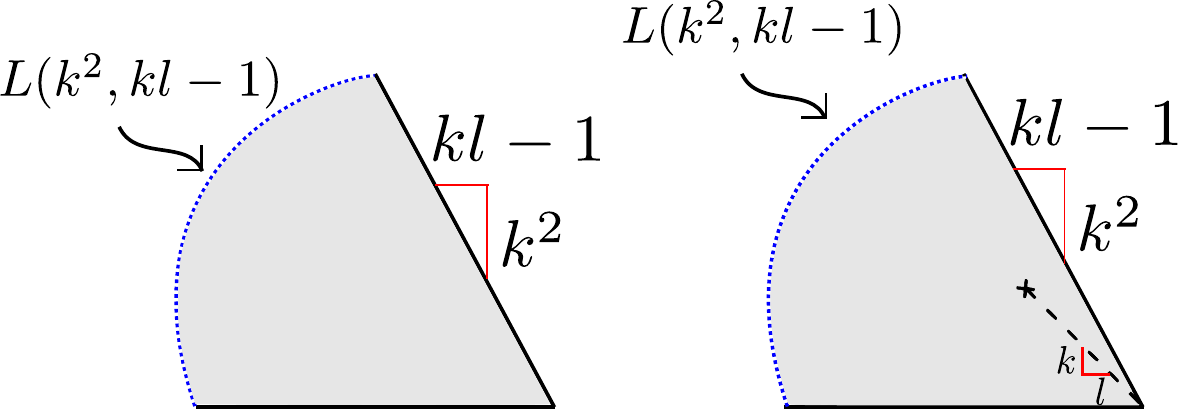}} 
 \caption{The picture on the left represents the base of a toric neighbourhood of
an orbifold point. The picture on the right is the base of an almost toric
fibration on a rational ball having the boundary the lens space $L\kl$. The lens
space $L\kl$ is the union of the fibers over the dotted component of the
boundary of the base diagrams.}
\label{figRatBD}
\end{center}
\end{figure}

In this paper, we employ the technique of neck-stretching from symplectic field
theory. We use it to find restrictions on the relative homotopy classes in
$\pi_2(\CP^2, T\abc)$ that can be represented by a holomorphic disc with Maslov
index 2. More precisely, we describe the convex hull of all classes in
$\pi_2(\CP^2,$ $T\abc)$, represented by Maslov index 2 holomorphic discs. It
follows directly from the work of Gromov \cite{Grom} that one can construct
Hamiltonian isotopy invariants for monotone Lagrangian submanifolds from
algebraic counts of holomorphic discs with Maslov index 2 (Theorem 6.4 of
\cite{RV}). This is a well known fact in the symplectic geometry community and
was inferred in Proposition 4.1.A of \cite{EP}. Using this invariants we are able to
distinguish the tori $T\abc$ for different Markov triples.

This is the first example of infinitely many Lagrangian isotopic but not
Hamiltonian isotopic monotone Lagrangian tori living in a compact symplectic
manifold. A similar result in $\R^6$ was given by Auroux in \cite{DA-R6}.

\begin{remark} 
  
In \cite{WW}, Wu used neck-stretching technique to compute
holomorphic discs bounded by his torus arising as a `central' fiber from a
semi-toric system on $\CP^2$. By the description of the Chekanov torus as
$T(1,1,4)$ given in \cite{RV}, it was suggestive to us that this torus is a different 
presentation of the Chekanov torus. A proof that Wu's torus, among others,
is a presentation of the Chekanov torus is given by Oakley and Usher in \cite{UO}. 
 
 \end{remark}

\begin{remark}
While writing this paper the author learned that Galkin and Mikhalkin have 
independently obtained the same result - \cite{GaMi}.
\end{remark}

The paper is organized as follows. 

In section \ref{abc} we show that the boundary of a neighbourhood of an orbifold
point in $\CP\abc$ is contactomorphic to a lens spaces of the form $L(k^2,kl -
1)$ as their boundaries. Hence we can apply rational blowdown on these
neighbourhoods, as in section 10 of \cite{MS}. We show that, after applying the
rational blowdowns, we obtain an almost toric fibration of $\CP^2$. This is done
by showing that we can get to the same almost toric fibration by performing a
series of nodal trade, nodal slide and transferring the cut operations to the
standard moment polytope of $\CP^2$.

In section \ref{SFT} we describe a technique originating in symplectic field
theory, often called neck-stretching. In the subsection \ref{split}, we give a
quick review of neck-stretching, also known as splitting of a symplectic
manifold along a contact hypersurface - see \cite{EGHsft}, \cite{BEHWZsft},
\cite{WW}. In subsection \ref{ACS}, we define what kind of almost complex
structures are adjusted for the neck-stretching we perform. In section
\ref{Examp}, we work out an example of neck-stretching that is important for the
proof of Theorem \ref{mainthm}. In subsection \ref{CompThm}, we state, from
\cite{BEHWZsft} and \cite{EGHsft}, the main compactness theorem of
pseudo-holomorphic curves for neck-stretching.

Section \ref{proof} is devoted to the proof of Theorem \ref{mainthm}. We use the
technique of neck- stretching to describe the convex hull of all classes in
$\pi_2(\CP^2,$ $T\abc)$, represented by Maslov index 2 holomorphic discs. The
proof then follows from Theorem 6.4 of \cite{RV} (Lemma \ref{lem_Thm6.4ofRV}),
which is an immediate consequence of the work of Gromov \cite{Grom} - see also
proposition 4.1 A of \cite{EP}.
 
\textbf{Acknowledgments.} I am extremely grateful to Denis Auroux for huge
support and invaluable discussions. Also, I want to thank Weiwei Wu for useful
discussions during my visit to Michigan State University. This work was
supported by the CNPq - Conselho Nacional de Desenvolvimento Cient\'ifico e
Tecnol\'ogico, Ministry of Science, Technology and Innovation, Brazil; the
Department of Mathematics of University of California at Berkeley; the National
Science Foundation grant number DMS-1264662; and (during the revision of the
paper) the Herchel Smith Postdoctoral Fellowship - Dep. of Pure Mathematics and
Mathematical Statistics - University of Cambridge.

\section{Degenerations to $\CP\abc$ and almost toric fibrations} \label{abc}

In this section, we show that, for each Markov triple $(a,b,c)$, there is 
a monotone Lagrangian torus $T\abc$, which is the `barycentric fiber' depicted
in a base diagram of an almost toric fibration. For a detailed account on
almost toric fibrations we refer the reader to the work of Symington \cite{MS}
and Leung-Symington \cite{NLMS}.

First, we will define an operation we call \emph{transferring the cut}
(Definition \ref{def_transcut}). A vector $v$ in a lattice
$\Lambda$ is called a \emph{primitive vector}, if it is not a positive multiple
of another vector in the lattice. If $w = \lambda v \in \Lambda \otimes \R$, with
$v \in \Lambda$ primitive and $\lambda \in \R_{\ge 0}$, we say that $w$ has
\emph{affine length} $\lambda$. Transferring the cut operation changes a base
diagram in $\R^2$, whose edges have affine lengths $a^2, b^2, c^2$, into another
base diagram whose edges have affine lengths $(3bc - a)^2, b^2, c^2$
(Proposition \ref{transcut}). These will represent the same almost toric
fibration of $\CP^2$ (see Figure \ref{figABCtoA'}).
 
We recall that Markov triples are obtained from
(1,1,1) by a sequence of `mutations' of the form 
\begin{equation} \label{MarkMut}
  (a , b , c) \rightarrow (a' = 3bc - a , b , c). 
  \end{equation}

Hence, we show the claim of \cite{RV} that an almost toric fibration having 
$T\abc$ as its central fiber can be obtained from the moment polytope of the 
standard torus action on $\CP^2$ by a series of nodal trade, nodal slide and
transferring the cut operations.

Along the way, we show that the boundary of a neighbourhood of an orbifold point
in $\CP\abc$ is a lens space of the form $L(k^2,kl - 1)$. It follows then that
we obtain an almost toric fibration of $\CP^2$ with $T\abc$ as the central fiber
by performing three rational blowdown operations on small neighbourhoods of the
corners of the standard base diagram of $\CP\abc$ - see section 10 of \cite{MS}
and Figure \ref{figRatBD}. 
  
Following the notation of section 5 of \cite{MS}, let $(B, \A, S)$ be a almost
toric base of some almost toric fibration, where $B$ is the base of the singular
Lagrangian fibration, $\A$ is the induced affine structure, $S =
\bigcup_{i=1}^{N} \{s_i\}$ and $s_i \in B$ are the nodes. Denote by $\A_0$ the
standard affine structure in $\R^2$. Recall that, for $b$ close to a node $s_i$,
there is an \emph{eigendirection} in $T_b B$ invariant under a monodromy around
each $s_i$. An \emph{eigenline} through $s_i$, is the maximal affine linear
immersed one manifold tangent to eigendirection of $s_i$ at each $T_b B$. An
\emph{eigenray} is one of the two components of an \emph{eigenline} minus the
respective node. See definition 4.11 of \cite{MS}.

Consider a base diagram $\Delta \subset \R^2$ (assume it is connected), which is
the image of an affine embedding $\Phi$ of $(B \setminus \bigcup_{i=1}^{N} R_i,
\A)$ into $(\R^2, \A_0)$, where $R_i$ is an (oriented) eigenray leaving the node
$s_i$. Denote $R_1$ by $R^+$, the eigenline containing $R^{+}$ by $L$, and by
$R^{-} \subset L$ the eigenray opposite to $R^+$. Let $P^{\pm} = \{ \lim_{x \to
y} \Phi(x) | x \in B ; y \in R^{\pm}\}$, be the \emph{branch locus} of
$R^{\pm}$. We have that the branch locus of $L$, $P^+ \cup \{s_1\} \cup P^-$,
divides the base diagram $\Delta$ into two components, $\Delta_l$ (left) and
$\Delta_r$ (right).

\begin{figure}[h!]   
  
\begin{center} 
\centerline{\includegraphics[scale=0.7]{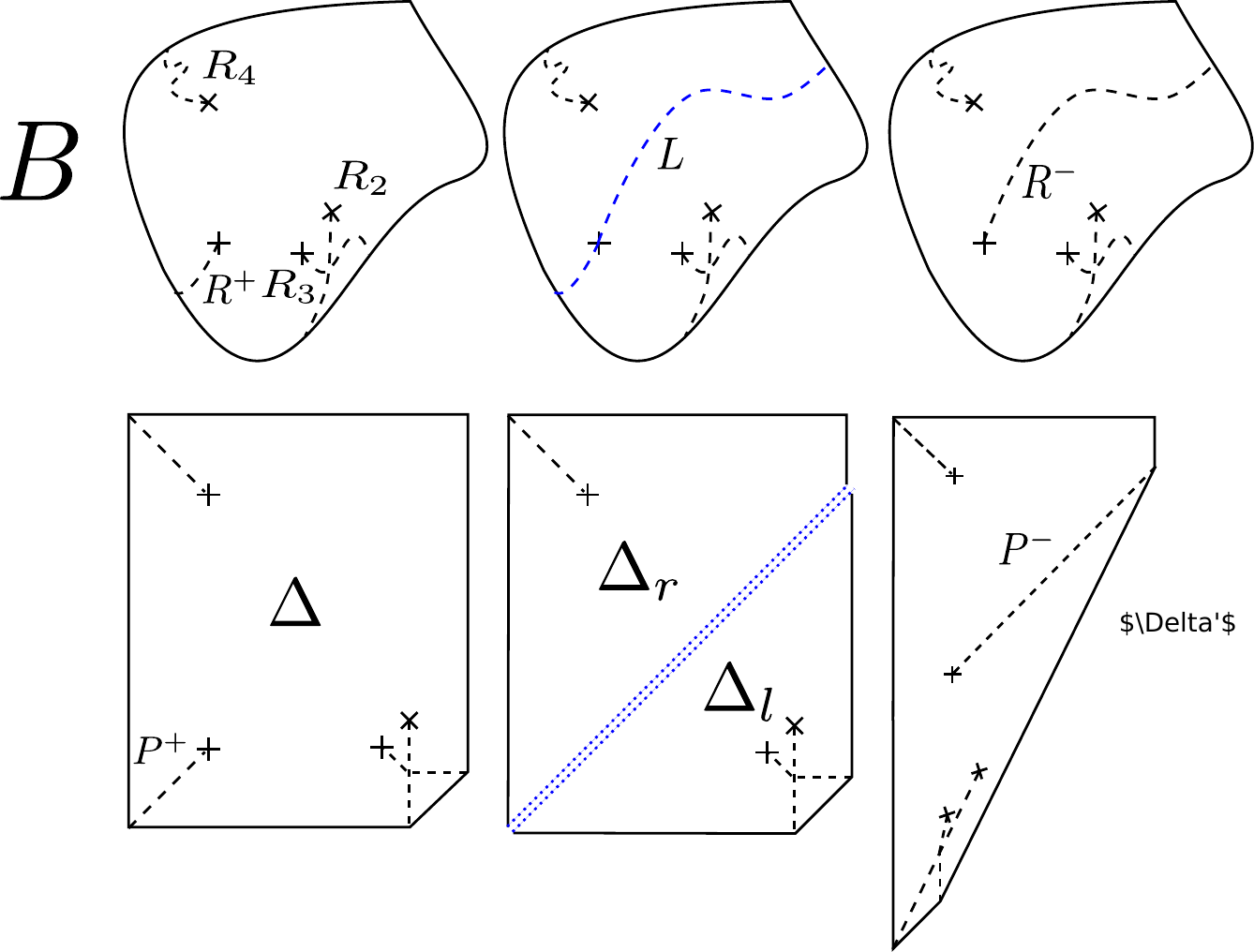}} 
\caption{The top three pictures represents a base $B$ of an almost toric
fibration with different set of eigenrays, represented by the dashed lines. Each
one of the bottom three pictures is the image of an affine embedding of $B$
minus some eigenrays (the ones represented on the picture right above) into $\R^2$.
By applying \emph{transferring the cut} operation on the left of $R^+$ to the
base diagram $\Delta$ (bottom left picture), we obtain the base diagram
$\Delta'$ (bottom right picture, after rescaling for visual purpose).}
\label{figtranscut} 
\end{center} 
\end{figure}  

We will construct a new base diagram $\Delta'$, corresponding to an affine
embedding $\Phi'$ of $(B \setminus R^{-}\cup \bigcup_{i=2}^{N} R_i, \A)$ into
$(\R^2, \A_0)$, representing the same almost toric fibration as $\Delta$. Let
$M^{lr}$ be the monodromy used to go from $\Delta_l$ to $\Delta_r$, through
$P^+$. Essentially, $\Delta'$ is obtained by gluing at $P^+$, $M^{lr}(\Delta_l)$
to $\Delta_r$ , where the monodromy $M^{lr}$ is applied centred at $s_1$. In
other words, $\Phi'$ is equal to $\Phi$ on $\Phi^{-1}(\Delta_r)$, to
$M^{lr}\circ \Phi$ on $\Phi^{-1}(\Delta_l)$ and extends continuously to $R^+$,
so that $\Phi'(R^+) = P^+$.

 \begin{definition}\label{def_transcut} The base diagram $\Delta'$ constructed
 above is said to be obtained from $\Delta$ by a \emph{transferring the cut}
 operation on the left of $R^+$. 
 
 \end{definition}

 The definition of \emph{transferring the cut} operation on the \emph{right} of
 $R_1$ is obtained in a totally analogous way. From now on, we will abuse
 notation, as we also denote by $R^+$ its branched cover $P^+$.

 Consider now the standard moment polytope of $\CP\abc$, for $(a,b,c)$ Markov triple,
 with oriented edges $a^2\bu_1$, $b^2\bu_2$, $c^2\bu_3$, as in the left picture
 of Figure \ref{figABC}. We can arrange $\bu_1 = (b^2 ,- m_1)$, $\bu_2 = - (a^2, m_2)$,
 $\bu_3 = (0,1)$.

\begin{figure}[h!]
  \begin{center}
\centerline{\includegraphics[scale=0.7]{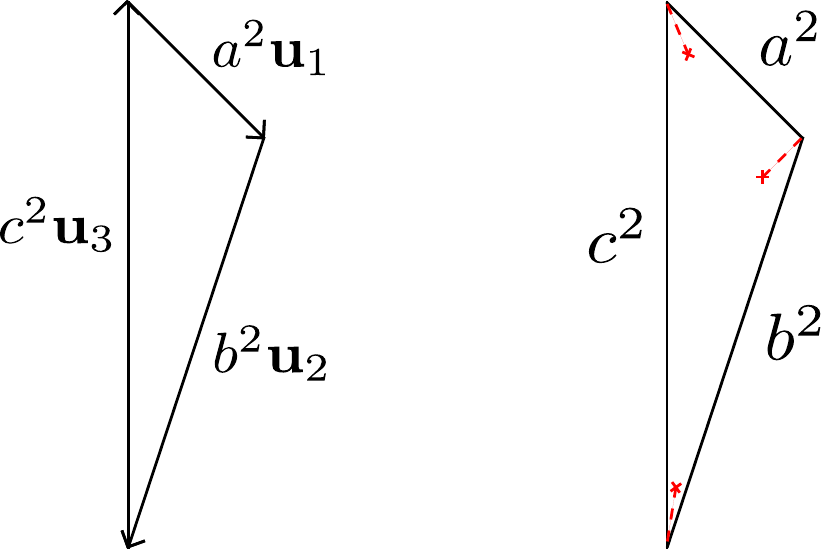}} 
 \caption{The left picture is the moment polytope for the standard torus action on
 $\CP\abc$. The right picture is obtained form the left one by three rational
 blowdown operations on the neighbourhood of each vertex.}
\label{figABC}
\end{center}
\end{figure}    

 \begin{proposition} \label{mili}
   The positive integers $m_1$, $m_2$ are of the form $bl_1 - 1$, $al_2 - 1$, 
   respectively, $l_1, l_2$ in $\Z_{>0}$. Hence, the boundary of a neighbourhood of the vertex opposite to
   $a^2\bu_1$, respectively $b^2\bu_2$, is a lens space of the form $L(a^2, al_2 - 1)$,
   respectively $L(b^2, bl_1 - 1)$.  
 \end{proposition}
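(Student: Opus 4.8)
The plan is to read both assertions off the explicit polytope data, feeding in one elementary consequence of \eqref{Markov}. First I would record the closure relation: since $a^2\bu_1,\ b^2\bu_2,\ c^2\bu_3$ are the consecutive oriented edge vectors of a closed triangle, they sum to zero, and substituting $\bu_1=(b^2,-m_1)$, $\bu_2=-(a^2,m_2)$, $\bu_3=(0,1)$, the first coordinates cancel and the second coordinates give
\begin{equation}\label{closure}
a^2 m_1 + b^2 m_2 = c^2 .
\end{equation}
Alongside this I would use that reducing \eqref{Markov} modulo $a$ and modulo $b$ yields $a\mid b^2+c^2$ and $b\mid a^2+c^2$, and that the entries of a Markov triple are pairwise coprime (a prime dividing two of them would divide the third by \eqref{Markov}, contradicting $\gcd(a,b,c)=1$, which holds for $(1,1,1)$ and is preserved by the mutations \eqref{MarkMut}).

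The form of $m_1,m_2$ then drops out. Reducing \eqref{closure} modulo $a$ gives $b^2 m_2\equiv c^2\equiv -b^2\pmod a$, and cancelling $b^2$ (legitimate since $\gcd(a,b)=1$) gives $m_2\equiv -1\pmod a$; symmetrically $m_1\equiv -1\pmod b$. Hence $m_2=al_2-1$ and $m_1=bl_1-1$ for integers $l_1,l_2$, and since $m_1,m_2$ are positive, $l_1=(m_1+1)/b$ and $l_2=(m_2+1)/a$ are positive integers. (This is consistent with $\bu_1,\bu_2$ being primitive, as $\gcd(b^2,bl_1-1)=\gcd(a^2,al_2-1)=1$ automatically.)

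For the lens spaces I would look at the vertex $V$ of the polytope opposite the edge $a^2\bu_1$, i.e.\ the vertex where $b^2\bu_2$ and $c^2\bu_3$ meet. The two primitive polytope edge vectors issuing from $V$ are $-\bu_2=(a^2,m_2)$ and $\bu_3=(0,1)$, and these span a sublattice of $\Z^2$ of index $a^2$; hence the local model of $\CP\abc$ at $V$ is $\CX^2/(\Z/a^2)$, and $V$ is the orbifold point of isotropy order $a^2$, i.e.\ the chart of $\CP\abc=(\CX^3\setminus\{0\})/\CX^*$ at $[1:0:0]$, where $\mu_{a^2}$ acts with weights $(b^2,c^2)$. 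Since $\gcd(a,b)=1$, this cyclic quotient singularity is $\tfrac1{a^2}(b^2,c^2)\cong\tfrac1{a^2}\bigl(1,(b^2)^{-1}c^2\bigr)$, and by \eqref{closure} $(b^2)^{-1}c^2\equiv m_2=al_2-1\pmod{a^2}$; so its link is the lens space $L(a^2,al_2-1)$. The identical computation at the vertex opposite $b^2\bu_2$ — edge vectors $\bu_1=(b^2,-m_1)$ and $-\bu_3=(0,-1)$, isotropy order $b^2$, action weights $(a^2,c^2)$, and $(a^2)^{-1}c^2\equiv m_1=bl_1-1\pmod{b^2}$ from \eqref{closure} — gives the link $L(b^2,bl_1-1)$.

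The modular arithmetic in the first two steps is short. The point I expect to demand the most care is the last one: matching the description above with the precise convention for $L(p,q)$ fixed in this section (notably, whether the link of $\tfrac1n(1,q)$ is $L(n,q)$ or $L(n,-q)$, and the ambiguity $L(n,q)\cong L(n,q^{-1})$), so that the parameter comes out exactly as $al_2-1$ rather than, say, its negative mod $a^2$; note that for $a\ge 3$ the orientation-reversed space $L(a^2,-(al_2-1))$ is not even of the form $L(a^2,al-1)$, so this bookkeeping is genuinely needed. The substantive input there — the reason the parameter is $al_2-1$ and not something unrelated — is precisely the congruence $c^2\equiv b^2 m_2\pmod{a^2}$ furnished by \eqref{closure}.
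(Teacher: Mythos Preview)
Your argument is correct and follows essentially the same route as the paper: the paper also derives the closure relation $a^2m_1+b^2m_2=c^2$, adds the Markov equation to it to obtain $a^2(m_1+1)+b^2(m_2+1)=3abc$, and then reduces modulo $a$ and $b$ (which is just your two reductions done in a single step), appealing to pairwise coprimality proved by the same mutation-to-$(1,1,1)$ argument. For the lens space identification the paper simply cites section~9.3 of \cite{MS}, whereas you compute the quotient type $\tfrac{1}{a^2}(b^2,c^2)\cong\tfrac{1}{a^2}(1,m_2)$ explicitly via $b^2m_2\equiv c^2\pmod{a^2}$; your caution about matching the $L(n,q)$ convention is well placed but does not alter the approach.
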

 
 \begin{proof}
  First we note that $(a,b,c)$ are mutually co-prime. In fact, if $p$ divides
  two of them, by the Markov equation \eqref{Markov}, it must divide the third one. The 
  numbers $a' = 3bc -a$, $b' = 3ac - b$ and $c' = 3ab - c$ are also divisible by $p$.
  Since we can reduce any Markov triple to $(1,1,1)$ by applying mutations of the form
  \eqref{MarkMut}, we must have $p = 1$.
  
  By equating the last coordinate of $a^2\bu_1 + b^2\bu_2 + c^2\bu_3 = 0$ and using
  the Markov relation \eqref{Markov} we get
  
  \begin{eqnarray}
    a^2m_1 + b^2m_2 & = & c^2, \\ \label{eqmi1}
    a^2(m_1 + 1) + b^2(m_2 + 1) & = & 3abc . \label{eqmi2}
  \end{eqnarray}
    
 Working modulo $a$ and modulo $b$, we must have $m_1 = bl_1 - 1$, $m_2 = al_2 - 1$. 
 Positivity of $l_1 , l_2$ follows from positivity of $m_1, m_2$.
 
 The second statement of the Proposition follows immediately from section 9.3 of 
 \cite{MS}.

 \end{proof}
 
By applying an appropriate $SL(2, \Z)$ transformation to the base diagram, 
sending $\bu_2$ to $(0,1)$, allow us to conclude, using the above Proposition,
that the remaining vertex has a neighbourhood with boundary a lens space of
the form $L(c^2, cl_3 - 1)$. Hence, we can apply rational blowdown operations in
a neighbourhood of each vertex. We get from $\CP\abc$, represented by its
standard moment polytope, to the almost toric fibration represented by the right
picture of Figure \ref{figABC}.
 
 \begin{remark}
   Consider the primitive vectors $\bw_1 = - (a, l_2)$, $\bw_2 = (-b, l_1)$ and
   $\bw_3$ representing the cuts respectively opposite to the edges
   $a^2\bu_1$, $b^2\bu_2$,
   $c^2\bu_3$. The reader can verify that 
   
   \begin{equation*}
     \frac{ac\bw_2 - bc\bw_1}{3} = c^2\bu_3; \ \ \frac{bc\bw_1 - ab\bw_3}{3} = b^2\bu_2; \ 
   \ \frac{ab\bw_3 - ac\bw_2}{3} = a^2\bu_1. 
   \end{equation*}
 
 This shows that the lines leaving the vertices in the direction of the
 respective cuts intersect in a common point (where the monotone fiber lies).
 This point is the weighted barycenter of the triangle, i. e., the center of
 mass of a system with weights $a^2$, $b^2$, $c^2$ on the vertices respectively
 opposite to the edges $a^2\bu_1$, $b^2\bu_2$, $c^2\bu_3$. To see this, the reader 
 only needs to check that
 
 \begin{equation*}
   a^2\frac{bc}{3}\bw_1 + b^2\frac{ac}{3}\bw_2 + c^2\frac{ab}{3}\bw_3 =
   \frac{abc}{3}( a\bw_1 + b\bw_2 + c\bw_3) = 0.
 \end{equation*}
 
 \end{remark}
 
 The remaining part of this section is devoted to prove Proposition
 \ref{transcut}, from which we deduce that the space obtaining after
 performing rational blowdowns in a neighbourhood of each (point mapped to each)
 vertex of the standard moment polytope of $\CP\abc$ is $\CP^2$.
  
 \begin{proposition} \label{transcut}
   Consider the diagram on the right of
 Figure \ref{figABC}, with edges $a^2\bu_1$, $b^2\bu_2$, $c^2\bu_3$, and the 
 cut $R^+$ opposite to $a^2\bu_1$. By applying transferring the cut operation
 to the left of $R^+$, we obtain a diagram so that the affine lengths of the 
 edges is a constant multiple of $c^2$, $b^2$, $a'^2$, where $a' = 3bc - a$.  
 \end{proposition}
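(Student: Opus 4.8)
The plan is to make the base diagram on the right of Figure \ref{figABC} completely explicit, carry out the transferring--the--cut operation by hand, and read off the affine lengths of the three edges of the resulting diagram. I keep the identification $\bu_1=(b^2,-m_1)$, $\bu_2=-(a^2,m_2)$, $\bu_3=(0,1)$ of the paragraph preceding Proposition \ref{mili}, with $m_1=bl_1-1$, $m_2=al_2-1$, and place the triangle with vertices $V_2=(0,0)$ (opposite the edge $b^2\bu_2$), $V_3=a^2\bu_1$ (opposite $c^2\bu_3$) and $V_1=a^2\bu_1+b^2\bu_2=(0,-c^2)$ (opposite $a^2\bu_1$); thus $V_2-V_1=c^2\bu_3$ and $V_3-V_1=-b^2\bu_2$. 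Everything is driven by two consequences of the Markov equation \eqref{Markov}: first $b^2+c^2=3abc-a^2=aa'$, whence $aa'-b^2=c^2$ and $aa'-c^2=b^2$; and second, substituting $m_1=bl_1-1$, $m_2=al_2-1$ into \eqref{eqmi2}, the relation $al_1+bl_2=3c$, whence $am_1+b^2l_2=b(al_1+bl_2)-a=3bc-a=a'$.

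Next I locate the cut. The cut $R^+$ opposite $a^2\bu_1$ lies on the eigenline $L$ through $V_1$ in the eigendirection $\bw_1=-(a,l_2)$ of the Remark; using $am_1+b^2l_2=a'$ one checks that $L$ meets the edge $a^2\bu_1=[V_2,V_3]$ at the point $P=\tfrac{ac^2}{a'}\bu_1$, and that $P-V_1=\tfrac{b^2c^2}{a'}(a,l_2)$ is a positive multiple of $(a,l_2)$, so $P$ really lies on $L$ and in the interior of the edge. Hence $P$ cuts the edge $a^2\bu_1$ into the segment $[V_2,P]$, of affine length $\tfrac{ac^2}{a'}$, and the segment $[P,V_3]$, of affine length $a^2-\tfrac{ac^2}{a'}=\tfrac{ab^2}{a'}$ (using $aa'-c^2=b^2$). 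The chord $[V_1,P]\subset L$ splits the triangle into the component $\Delta_l$ containing $V_2$ --- hence containing the edge $c^2\bu_3$ and the segment $[V_2,P]$ --- and the component $\Delta_r$ containing $V_3$ --- hence containing the edge $b^2\bu_2$ and the segment $[P,V_3]$.

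I then identify the monodromy $M^{lr}$ taking $\Delta_l$ to $\Delta_r$ across $P^+$. It is one of the two transvections of $\R^2$ fixing the line $L$ pointwise, and the correct one --- the one for which the result is again a triangle --- is, centred at $V_1$, the map $v\mapsto v-\det\!\big((a,l_2),\,v-V_1\big)(a,l_2)$, because this is the one with $M^{lr}(\bu_3)=\bu_2$; that single identity is the heart of the computation. Now $\Delta'=\Delta_r\cup M^{lr}(\Delta_l)$, glued along $R^+=[V_1,P]$, and its boundary consists of: the edge $[V_1,V_3]$ of $\Delta_r$, of direction $-\bu_2$ and affine length $b^2$; the image $M^{lr}([V_1,V_2])$ of the edge $c^2\bu_3$, which runs from $V_1=M^{lr}(V_1)$ to $M^{lr}(V_2)=V_1+c^2\bu_2$, hence of direction $+\bu_2$ and affine length $c^2$; the segment $[P,V_3]$, of affine length $\tfrac{ab^2}{a'}$; and the image $M^{lr}([V_2,P])$, of affine length $\tfrac{ac^2}{a'}$ (the last two lengths are preserved because $M^{lr}\in SL(2,\Z)$ and $\bu_1$ is primitive). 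The first two pieces leave $V_1$ in opposite directions along $\R\bu_2$, so they amalgamate into one edge $[M^{lr}(V_2),V_3]$, with $V_1$ now in its interior and with affine length $b^2+c^2=aa'$. Therefore $\Delta'$ is the triangle with vertices $M^{lr}(V_2)$, $V_3$, $P$, whose edges have affine lengths $aa'$, $\tfrac{ab^2}{a'}$, $\tfrac{ac^2}{a'}$, i.e. $\tfrac{a}{a'}\big(a'^2,b^2,c^2\big)$ --- a constant multiple of $c^2$, $b^2$, $a'^2$. (One checks along the way that $|\det|=a^2b^2c^2$ is the same for $\Delta$ and $\Delta'$, so that the total symplectic volume is unchanged, as it must be.)

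The one genuinely delicate point is selecting this transvection rather than its inverse: with the wrong orientation one gets $M^{lr}(\bu_3)=(a^2,al_2+1)$ instead of $\bu_2$, the two pieces at $V_1$ fail to line up, and $\Delta'$ would come out a genuine quadrilateral instead of the asserted triangle. Beyond that, the argument is bookkeeping with $SL(2,\Z)$ matrices, with the Markov relation \eqref{Markov} entering at exactly two places: as $am_1+b^2l_2=a'$ to place the point $P$, and as $b^2+c^2=aa'$ to evaluate the length of the new long edge.
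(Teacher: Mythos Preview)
Your argument is correct and follows essentially the same route as the paper's own proof: locate the point where the eigenline through $V_1$ hits the opposite edge, apply the nodal monodromy to one half of the triangle, and read off the three affine lengths of the reglued triangle, using the Markov relation twice (once as $am_1+b^2l_2=a'$ to place $P$, and once as $b^2+c^2=aa'$ to evaluate the long edge).

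The differences from the paper are purely organisational. The paper first rescales the triangle by $a'$ so that all edge lengths stay integral and then checks the divisibility $b^2\mid (a')^2+c^2m_1$ to extract the last affine length; you skip the rescaling and obtain the lengths $\tfrac{a}{a'}(a'^{2},b^{2},c^{2})$ directly, which is a bit cleaner. There is also a harmless left/right discrepancy: the paper in effect keeps the edge $c^2\bu_3$ fixed and moves the edge $b^2\bu_2$ (its $M^{lr}$ sends $\bu_2\to\bu_3$, producing a long \emph{vertical} edge of length $a'(b^2+c^2)$), whereas you keep $b^2\bu_2$ fixed and move $c^2\bu_3$ (your $M^{lr}$ sends $\bu_3\to\bu_2$, producing a long edge parallel to $\bu_2$). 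The two resulting triangles differ by exactly the monodromy transvection, hence by an element of $SL(2,\Z)$, so they represent the same almost toric base and have identical affine edge lengths. Your closing remark that choosing the wrong transvection yields a genuine quadrilateral is the correct diagnostic for picking the right sign.
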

  
\begin{figure}[h!]
\begin{center}
 \begin{minipage}[b]{0.5\linewidth}
\centerline{\includegraphics[scale=0.85]{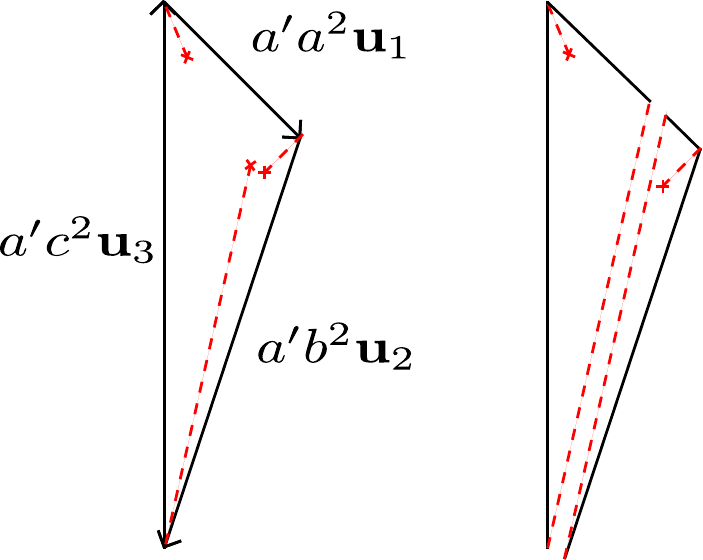}} 
\end{minipage}
 \begin{minipage}[b]{0.35\linewidth}
\centerline{\includegraphics[scale=0.85]{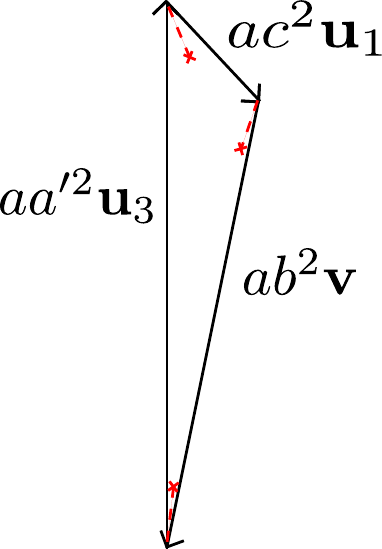}} 
 \end{minipage}
\end{center}
\caption{Transferring the cut operation to the left of the cut $R^+$. We
multiplied by a factor of $a'$ to simplify computations and lengthened the cut
so that the resulting diagram contains $T(c^2,b^2,a'^2)$ as its barycentric
fiber.}
\label{figABCtoA'}

\end{figure} 

\begin{proof}
  We first multiply all the edges by a factor of $a'$ in order to make the 
  computations simpler. We cut the edge parallel to $\bu_1$ at a length $\alpha$
  so that, for $\bw_1 = -(a,l_2)$ the vector representing the eigenray $R^+$, we have
  
  \begin{equation} \label{eqalpha1}
    a'c^2\bu_3 = - \beta \bw_1 - \alpha \bu_1
  \end{equation} 
   
   Using that $\bu_1 = (b^2 ,-(bl_1 - 1))$ - see Proposition \ref{mili} - we have
   \begin{eqnarray}
     0 & = & a\beta - b^2 \alpha;  \label{eqalpbeta1} \\
     a'c^2 & = & \beta l_2 + (bl_1 - 1)\alpha. \label{eqalpbeta2} 
   \end{eqnarray}

 From equation \eqref{eqmi2} and $m_1 + 1 = bl_1$, $m_2 + 1 = al_2$, we get
 
 \begin{equation} \label{eql1l2}
   3c = bl_2 + al_1.
 \end{equation}
   
 Using \eqref{eql1l2} and \eqref{eqalpbeta1} in \eqref{eqalpbeta2}, and recalling
 that $a' = 3bc -a$, we get
 
 \begin{equation}
   a'c^2 = (\frac{b^2}{a}l_2 + bl_1 - 1)\alpha = \frac{1}{a}( b( b l_2 + a l_1) - a)\alpha
  = \frac{a'}{a} \alpha. \label{eqalpha2}
 \end{equation}

 Hence we cut at $ac^2 \bu_1$. Now we apply the monodromy through $R^+$ from
 left to right (recall that $R^+$ is oriented pointing away from the node),
 which sends $\bu_2$ to $\bu_3$ and fixes $\bw_1$. After regluing, the vertical
 edge has length $a'(c^2 + b^2) = a(a')^2$, since $a \cdot a' = b^2 + c^2$ is
 another way to express the Markov relation \eqref{Markov}. We only need to show
 that the remaining edge represented by the vector $-a(a')^2\bu_3 - ac^2\bu_1$
 has affine length $ab^2$.
 
We have that
\begin{equation} \label{lambv}
  -a(a')^2\bu_3 - ac^2\bu_1 = -a \cdot (c^2b^2 , (a')^2 + c^2m_1).
\end{equation}

Since $c$ and $a'$ are co-primes, we only need to show that $b^2$ divides $(a')^2 + 
c^2m_1$. From equation \eqref{eqmi1} and $a \cdot a' = b^2 + c^2$, we get that

\begin{equation}
  a^2m_1 + c^2 \equiv 0 \mod b^2, \ \ \text{and} \ \  a \cdot a' \equiv c^2 \mod 
  b^2. 
\end{equation}

Hence,

\begin{equation}
  (a')^2 + c^2m_1 \equiv c^4a^{-2} + c^2(-c^2a^{-2}) \equiv 0 \mod b^2.
  \end{equation}

 \end{proof}  
  
  It is clear that considering another cut or transferring the cut operation to
  the left gives an analogous result. Recall that any given Markov triple
  $(a,b,c)$ can be obtained from $(1,1,1)$ by a sequence of mutation operations
  \eqref{MarkMut}. Therefore, one can apply a series of nodal trades, nodal
  slides and transferring the cut operations to the standard moment polytope of
  $\CP^2$, scaled by a factor of $abc$, to get to the almost toric fibration,
  represented by the diagram on the right of Figure \ref{figABC}, containing
  $T\abc$ as the monotone fiber.
  
  \begin{corollary} \label{CorTransCut}
  
   Perform three rational blowdowns on small neighbourhoods of (the pre-image of)
   each vertex of the standard moment polytope of $\CP\abc$, bounded by lens
   spaces of the form $L(a^2,al_1 -1), L(b^2, bl_2 -1), L(c^2,cl_3 - 1)$. We
   then obtain an almost toric fibration of $\CP^2$ as depicted in the right
   picture of Figure \ref{figABC}.
  
  \end{corollary}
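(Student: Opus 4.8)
The plan is to read this off from Proposition \ref{mili}, Proposition \ref{transcut}, and Symington's analysis of rational blowdowns, assembling the pieces as follows. First I would record that each of the three vertices of the standard moment polytope of $\CP\abc$ has a neighbourhood whose boundary is a lens space of the stated form: for the vertices opposite $a^2\bu_1$ and $b^2\bu_2$ this is exactly Proposition \ref{mili}, and for the third vertex one applies the $SL(2,\Z)$ change of base that sends $\bu_2$ to $(0,1)$ and invokes Proposition \ref{mili} again. By \S9.3 and \S10 of \cite{MS}, such a lens space $L\kl$ bounds a rational ball carrying an almost toric fibration as in Figure \ref{figRatBD}, and replacing the vertex neighbourhood by this rational ball is the rational blowdown; performing all three produces a closed symplectic four-manifold $X$ equipped with an almost toric fibration whose base diagram is the right-hand picture of Figure \ref{figABC} --- a triangle with edge affine lengths $a^2,b^2,c^2$ together with three interior nodal fibres whose cuts $\bw_1,\bw_2,\bw_3$ point to the weighted barycentre (the Remark after Proposition \ref{mili} checks that these three cut-lines do meet in that point, so the monotone fibre $T\abc$ sits over it).

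It then remains to identify $X$ with $\CP^2$, and here the key input is that the three operations in play --- nodal trade, nodal slide, and transferring the cut --- each leave the underlying symplectic four-manifold unchanged (the first two by \cite{MS,NLMS}, the third because the construction preceding Definition \ref{def_transcut} re-embeds the \emph{same} triple $(B,\A,S)$, and Proposition \ref{transcut} only records the new edge lengths), while transferring the cut realises on the level of edge affine lengths precisely a Markov mutation $(a^2,b^2,c^2)\mapsto\big((3bc-a)^2,b^2,c^2\big)$ up to overall scale. Since every Markov triple is obtained from $(1,1,1)$ by a finite sequence of mutations \eqref{MarkMut}, I would run that sequence backwards: starting from the base diagram of $X$ and alternately applying transferring-the-cut operations (to the appropriate edge, with the appropriate choice of oriented eigenray) and nodal slides to renormalise the cut lengths, after finitely many steps the base diagram becomes $abc$ times the standard Delzant triangle of $\CP^2$, carrying three interior nodal fibres with cuts pointing to its barycentre --- still fibring the same manifold $X$. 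Finally, undoing the three nodal trades (again a symplectic-manifold-preserving move) deletes the three nodal fibres and restores the three corners, leaving the Delzant triangle that is the moment polytope of $(\CP^2,\omega_{FS})$ rescaled by $abc$; hence $X\cong\CP^2$, and after rescaling the fibration is an almost toric fibration of the monotone $\CP^2$ with $T\abc$ as its monotone fibre.

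The only genuine verification, beyond citing these structural facts, is the combinatorial bookkeeping of the middle step: that the chain of transferring-the-cut operations, applied edge by edge in the order prescribed by the mutation tree \eqref{MarkMut}, composes to the full reduction of $(a^2,b^2,c^2)$ to $(1,1,1)$, with consistent choices of eigenrays and with the weighted barycentre --- hence the monotone fibre --- carried along unchanged at every step. This is essentially the content already worked out in the proof of Proposition \ref{transcut} (one mutation step), together with the co-primality and barycentre identities in Proposition \ref{mili} and the Remark following it, so I expect no new difficulty here; the main point to be careful about is simply tracking the orientations of the cuts and the normalising $SL(2,\Z)$ transformations between consecutive diagrams so that the moves can actually be concatenated.
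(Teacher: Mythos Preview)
Your proposal is correct and matches the paper's own argument. The paper does not give a separate proof of the Corollary but deduces it from the paragraph immediately preceding it: since every Markov triple reduces to $(1,1,1)$ by mutations, one can run a sequence of nodal trades, nodal slides, and transferring-the-cut operations from the standard moment polytope of $\CP^2$ (scaled by $abc$) to the right-hand diagram of Figure~\ref{figABC}; as these moves preserve the symplectic manifold and the rational-blowdown construction yields the same base diagram, the result is $\CP^2$. Your write-up simply runs this sequence in the reverse direction and makes the role of Symington's classification and the reconstruction of the Delzant triangle explicit, but the content is the same.
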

  
  \begin{remark}
    The symplectic form of the almost toric fibration of $\CP^2$ represented by 
    the right base diagram of Figure \ref{figABC} equivalent to the symplectic 
    form of the standard moment polytope of $\CP^2$ scaled by a factor of $abc$. 
  \end{remark}
  
\section{Neck Stretching - SFT} \label{SFT}

In this section we discuss a technique coming from symplectic field theory,
often called neck-stretching. It is a way of splitting a symplectic manifold
along a contact hypersurface in which we stretch a neighbourhood of the contact
hypersurface until it reaches a limit where it splits apart. Compactness results tell us
what happens to the limit of pseudo-holomorphic curves after we split the
symplectic manifold. We refer the interested reader to \cite{EGHsft}, 
\cite{BEHWZsft}. In \cite{WW}, Wu also gives a quick review on neck-stretching.

Our idea is to apply these techniques to the lens spaces described on the
previous section. More precisely, to the boundaries of rational balls which are
neighbourhoods of the singular fibers on an almost toric fibration containing
$T\abc$ as the central fiber, depicted in the right diagram of Figure
\ref{figABC} - see also Figure \ref{figRatBD}. See section 9 of \cite{MS}, for
understanding how to see the respective lens spaces as contact manifolds.

\subsection{Splitting} \label{split}
Let $V$ be a hypersurface of contact type in a symplectic manifold $(M, 
\omega)$. This means that, in a neighbourhood of $V$, one can define a Liouville 
vector field $X$ (so the Lie derivative $\mathcal{L}_X \omega = d\iota_X \omega = 
\omega$), transversal to $V$, for which $\alpha = \iota_X \omega$ restricted to $V$ is
a contact form.

Following the notations of \cite{EGHsft}, let us assume that $V$ divides $M$ in
two components $M_+$ and $M_-$, as is the case for each lens space described in
section \ref{abc}. We choose $M_+$ and $M_-$ so that $X$ points inwards along
$M_+$, and outwards along $M_-$. Hence we can complete $M_+$ and $M_-$ by gluing
along $V$ different halves of its symplectization $(V \times \R, d(e^t\alpha))$,
matching $X$ with $\frac{\del}{\del t}$, obtaining 

\begin{equation}\label{m-inf}
    (M_{-}^{\infty}, \omega_{-}^{\infty}) = (M_-, \omega)\cup (V\times [0, \infty), 
  d(e^t\alpha))
\end{equation}

and

\begin{equation}\label{m+inf}
    (M_+^\infty, \omega_+^\infty) = (M_+, \omega)\cup (V\times (-\infty, 0], 
  d(e^t\alpha)). 
\end{equation}
 
We also consider partial completions 
 
\begin{equation}\label{m-tau}
    (M_{-}^{\tau}, \omega_{-}^{\tau}) = (M_-, \omega)\cup (V\times [0, \tau], 
  d(e^t\alpha))
\end{equation}

and

\begin{equation}\label{m+tau}
    (M_+^\tau, \omega_+^\tau) = (M_+, \omega)\cup (V\times [-\tau, 0], 
  d(e^t\alpha)). 
\end{equation}

Now we note that $(V\times [-\tau, 0], e^\tau d(e^t\alpha)) = (V\times [0, \tau],
d(e^t\alpha))$ and \\$(V\times [0, \tau], e^{-\tau} d(e^t\alpha)) = (V\times [-
\tau, 0], d(e^t\alpha))$. So, we see that $(M_{-}^{\tau}, e^{-\tau}\omega_{-}^{\tau})$,
$(V\times [- \tau, \tau], d(e^t\alpha))$ and $(M_+^\tau, e^\tau \omega_+^\tau)$ fit 
together to give a symplectic manifold $(M^\tau, \omega^\tau)$. We say that we 
inserted a neck $V\times [- \tau, \tau]$ of length $2\tau$ in between $M_+$ and $M_-$.

We see that in the limit we have $M^\infty = M_-^\infty \cup M_+^\infty$ - see 
section 1.3 of \cite{EGHsft}, especially Figure 1. We 
note that $\omega^\tau$ goes to zero in one end, while it blows up in the other. 
For our purpose, we will be more focused in what happens on $M_+$, so we 
consider a stretching $(M^\tau, e^{-\tau}\omega^\tau)$, so that the symplectic 
form converges to $0$ in $M_-^\infty$ and to $\omega_+^\infty$ in $M_+^\infty$.

\subsection{Almost complex structures - compatible and adjusted} \label{ACS}

For a symplectic manifold with cylindrical ends, we require some other
properties for an almost complex structure $J$ to be said compatible. Besides
the usual compatibility conditions with the symplectic form, we say that $J$
is \emph{compatible} if at any
cylindrical end of the form $(V \times [0,\infty), d(e^t\alpha))$ or
$(V\times(-\infty,0], d(e^t\alpha))$, positive or negative, we have that

\begin{itemize}
  \item[-] $J$ is invariant with respect to translations $t \mapsto t \pm a$, 
  $a>0$;
  \item[-] the contact structure $\xi = \{\alpha = 0\}$ is invariant under $J$;
  \item[-] $J\frac{\del}{\del t} = R_\alpha$, where $R_\alpha$ is the Reeb vector
  field associated with $\alpha$ - see section 1.2 of \cite{EGHsft} for definiton
  of Reeb vector field.   
\end{itemize}


We say that an almost complex structure $J$ on $M$ is \emph{adjusted} for the
splitting situation if 

\begin{itemize}
 
  \item[-] on $V$, the contact structure $\xi = \{\alpha = \iota_X \omega = 0\}$
   is invariant under $J$, and
  
  \item[-] $JX = R_\alpha$, where $X$ is (a multiple of) the Liouville vector field
           defining $\alpha$ and $R_\alpha$ is the Reeb vector field associated
           with $\alpha$.  
   
\end{itemize}
 
 Given an adjusted $J$, we can define $J^\tau$ on $M^\tau$ by setting it equal
 to $J$ on $M_+$ and $M_-$ and requiring it to be invariant under translation on
 $V\times[-\tau,\tau]$. So, when $\tau \to \infty$, we end up with compatible
 almost complex structures $J_+^\infty$ on $(M_+^\infty,\omega_+^\infty)$ and
 $J_-^\infty$ on $(M_-^\infty, \omega_-^\infty)$. 

\subsection{Example}\label{Examp}

We consider the following example because it is going to be important in the proof of 
Theorem \ref{mainthm}.

Consider $\CX^2$ with the Fubini-Study symplectic form $\omega = i/2(dz_1\wedge
d\bar{z}_1 + dz_2\wedge d\bar{z}_2)$ and $S^3(2) = \{ |z_1|^2 + |z_2|^2 = 4 \}$.
We have that the radial vector field $X = \frac{1}{2}(z_1, z_2)$ is a Liouville
vector field. In fact, $\alpha := \iota_{X} \omega = i/4(z_1d\bar{z}_1 -
\bar{z}_1 dz_1 + z_2d\bar{z}_2 - \bar{z}_2dz_2)$ and $\Lag_{X}\omega =
d\iota_{X} \omega = \omega$.

Now we see that the standard complex structure is adjusted. First one can check
that $\xi = \{\alpha = 0\} = TS^3(2) \cap i \cdot TS^3(2)$, which is formed by the
vectors that are orthogonal to both $X$ and $iX$, with respect to the Euclidean
metric $<\cdot,\cdot> = \omega(\cdot, i\cdot)$. Hence the first condition is
satisfied and $d\alpha(iX, \cdot) = \omega_{|TS^3(2)}(iX, \cdot) = <X,
\cdot>_{|TS^3(2)} = 0$. Also, $\alpha(iX) = \frac{|z_1|^2 + |z_2|^2}{4} = 1$.
Therefore, $iX = R_\alpha$, the Reeb vector field associated with $\alpha$,
showing that the complex structure given by multiplication by $i$ is adjusted for
the splitting with regards to $V = S^3(2)$.

Let's now look at $(M_+^\infty, \omega_+^\infty, J_+^\infty)$ - see \ref{m+inf}
- for $M = \CX^2$, $V = S^3(2)$, $M_+ = \CX^2 \setminus B(2)$ and considering
the standard Fubini-Study form and complex structure. 

\begin{claim} \label{claimExamp}
  After splitting, $(M_+^\infty, \omega_+^\infty, J_+^\infty)$ is a K\"ahler manifold
 isomorphic to $(\CX^2 \setminus \{ 0 \}, \omega_{FS}, i)$.
 
 \end{claim}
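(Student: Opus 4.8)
The claim asserts that the positive completion $(M_+^\infty,\omega_+^\infty,J_+^\infty)$ built from $(\CX^2\setminus B(2), S^3(2))$ is biholomorphic (and symplectomorphic) to the punctured plane $(\CX^2\setminus\{0\},\omega_{FS},i)$. The natural approach is to write down an explicit diffeomorphism between the cylindrical end $S^3(2)\times(-\infty,0]$ glued onto $\CX^2\setminus B(2)$ and the punctured ball $B(2)\setminus\{0\}\subset\CX^2\setminus\{0\}$, and check it intertwines all the structures.

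First I would recall the elementary fact that the symplectization of the standard contact sphere is the complement of the origin. Concretely, the radial Liouville flow of $X=\tfrac12(z_1,z_2)$ at time $t$ is the map $(z_1,z_2)\mapsto e^{t/2}(z_1,z_2)$ (since $\dot z_j=\tfrac12 z_j$), so the time-$t$ Liouville image of $S^3(2)$ is the sphere $S^3(2e^{t/2})$ of radius $2e^{t/2}$. This already exhibits a neighbourhood of $S^3(2)$ inside $\CX^2\setminus\{0\}$ as a piece of the symplectization: the map $\Psi\colon S^3(2)\times\R\to\CX^2\setminus\{0\}$, $\Psi(x,t)=e^{t/2}\,x$, is a diffeomorphism, and one checks directly that $\Psi^*\omega_{FS}=d(e^t\alpha)$ on this region (using $\mathcal L_X\omega=\omega$, equivalently that the Liouville flow scales $\omega$ by $e^t$, and that $\alpha=\iota_X\omega$), and that $\Psi^*i$ is the translation-invariant adjusted almost complex structure — this is exactly the content of the three adjustedness bullets verified in the paragraph preceding the claim. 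Restricting to $t\le 0$ identifies $S^3(2)\times(-\infty,0]$ with the punctured ball $B(2)\setminus\{0\}$, with $t\to-\infty$ corresponding to $0$.

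Next I would glue. By construction $M_+^\infty=(\CX^2\setminus B(2))\cup_{S^3(2)}\big(S^3(2)\times(-\infty,0]\big)$, matching $X$ with $\partial/\partial t$ along $V$. Define a map to $\CX^2\setminus\{0\}$ by the identity on $\CX^2\setminus B(2)$ and by $\Psi$ on the cylindrical end; these agree on the overlap $S^3(2)=S^3(2)\times\{0\}$ because $\Psi(x,0)=x$, and they patch smoothly because the gluing was done precisely along the Liouville flow. The image is $(\CX^2\setminus B(2))\cup(B(2)\setminus\{0\})=\CX^2\setminus\{0\}$, and the map is a bijection. The computation above shows it pulls $\omega_{FS}$ back to $\omega_+^\infty$ and $i$ back to $J_+^\infty$, so it is the desired Kähler isomorphism. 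Finally, since $i$ is integrable and $\omega_{FS}$ is Kähler with respect to it, so is the pulled-back structure, confirming that $(M_+^\infty,\omega_+^\infty,J_+^\infty)$ is Kähler.

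**Main obstacle.** None of this is deep; the only place requiring care is bookkeeping the exponential reparametrisations so that the two descriptions of the neck — the abstract symplectization coordinate $t$ with form $d(e^t\alpha)$, and the concrete radial coordinate on $\CX^2$ — line up with the normalisations $|z|^2=4$, $\alpha(R_\alpha)=1$, and the factor-of-$\tfrac12$ in $X$. I would simply track that the Liouville time $t$ corresponds to radius $2e^{t/2}$ and that $\Psi^*(e^t\alpha)$ has the right primitive, and then everything follows from $\mathcal L_X\omega=\omega$ together with the adjustedness already checked.
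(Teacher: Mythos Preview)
Your proposal is correct and follows essentially the same route as the paper: both arguments use the explicit map $\Psi\colon S^3(2)\times(-\infty,0]\to B(2)\setminus\{0\}$, $(x,t)\mapsto e^{t/2}x$, glued to the identity on $\CX^2\setminus B(2)$, and verify that it intertwines the symplectic and complex structures. The only cosmetic difference is that the paper checks $\Psi_*J_+^\infty=i$ by an explicit decomposition of tangent vectors while you invoke the Liouville flow and the adjustedness conditions already established; both verifications are straightforward and equivalent.
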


\begin{proof}
  
  We only need to show that the following embedding gives an biholomorphic 
  symplectomorphism between $(S^3(2) \times (-\infty, 0], d(e^t\alpha), 
  J_+^\infty)$ and the punctured ball $(B(2) \setminus \{ 0 \}, \omega_{FS}, i)$. 
 
 \begin{eqnarray} \label{Exampemb}
 \phi : S^3(2) \times (-\infty, 0] &\longrightarrow& \CX^2 \setminus \{ 0 \}   \nonumber \\
((z_1, z_2), t) & \mapsto & (e^{\frac{t}{2}}z_1, e^{\frac{t}{2}}z_2)
\end{eqnarray}
 
 We see that, at $((z_1, z_2), t)$, $d\phi (\frac{\del}{\del t}) =
 \frac{1}{2}(e^{\frac{t}{2}}z_1, e^{\frac{t}{2}}z_2) = X \circ \phi$.
 
 Take a vector $v \in T\CX^2$ at a point $(e^{\frac{t}{2}}z_1,
 e^{\frac{t}{2}}z_2)$. We can write $$v = e^{t/2}(u + aX + bR),$$ where $R =
 iX$ and $u \in T_{(e^{t/2}z_1, e^{t/2}z_2)} S^3(2e^{t/2}) \cap i \cdot T_{(e^{t/2}z_1,
 e^{t/2}z_2)} S^3(2e^{t/2}) \cong T_{(z_1, z_2)} S^3(2) \cap i \cdot T_{(z_1, z_2)}
 S^3(2)$. So, $v = d\phi (u + bR, a\frac{\del}{\del t})$.
 
 Then, recalling that $J_+^\infty \frac{\del}{\del t} = R$, we have that
 
 $$ (\phi_* J_+^\infty)\cdot v = d\phi (J_+^\infty(u + bR, 
 a\frac{\del}{\del t})) = d\phi(iu + aR, -b \frac{\del}{\del t}) =
  e^{t/2}(iu - bX + aR) = iv .$$
  
  Therefore, $\phi_* J_+^\infty = i$. We leave to the reader to check that 
  $\phi^* \omega_{FS} = d(e^t\alpha)$.

\end{proof}

The same result holds with a sphere of different radius. We only have to glue the
infinite neck using a multiple of the Liouville vector field $X$ to obtain,
after multiplication by $i$, the Reeb vector field.

Consider now the lens space $L(n,m)$ inside $\CX(n,m) := \CX^2 / (z_1,z_2) \sim
(e^{2\pi i / n}z_1, e^{2\pi i m /n}z_2)$ as the quotient of $S^3 = \del B(r)$,
for some fixed $r > 0$. We consider in $L(n,m)$ the contact structure induced
from the one in $S^3$, which is invariant under the $\Z/n\Z$ action used for the
quotient. Taking the standard complex and symplectic structures in $\CX(n,m)$
coming from $\CX^2$, and contact structure on $V = L(n,m)$ we obtain an
analogous result:

\begin{corollary}\label{corExamp}
  
  Using the same notation for the above setting, we have that, after splitting
  along $V = L(n,m)$, $(M_+^\infty, \omega_+^\infty, J_+^\infty)$ is a K\"ahler
  manifold isomorphic to $(\CX(n,m) \setminus \{ 0 \}, \omega_{Std}, i)$.
  
  \end{corollary}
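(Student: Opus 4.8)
The plan is to reduce Corollary \ref{corExamp} to Claim \ref{claimExamp} by a straightforward equivariance argument. First I would observe that the $\Z/n\Z$ action on $\CX^2$ used to define $\CX(n,m)$ is by biholomorphic isometries: each generator acts as a unitary linear map, hence preserves $\omega_{FS}$, the complex structure $i$, the radial Liouville field $X = \frac12(z_1,z_2)$, and the sphere $S^3(r) = \del B(r)$. Consequently the contact form $\alpha = \iota_X\omega$ restricted to $S^3(r)$ is $\Z/n\Z$-invariant, so it descends to a contact form on the lens space $V = L(n,m)$, and its Reeb vector field $R_\alpha = iX$ descends as well. Thus the standard complex structure on $\CX(n,m)$ is adjusted for the splitting along $V = L(n,m)$, exactly as in the smooth $S^3$ case.

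Next I would note that all of the constructions in the splitting recipe of subsection \ref{split} and all of the maps in the proof of Claim \ref{claimExamp} are canonical and therefore commute with the free $\Z/n\Z$-action away from the origin. Concretely, $M_+ = \CX^2 \setminus B(r)$ carries the diagonal action, and the half-symplectization $V_{S^3} \times (-\infty,0]$ of $S^3(r)$ carries the product action (trivial on the $\R$-factor); the gluing $M_+^\infty = M_+ \cup_{S^3(r)} (S^3(r) \times (-\infty,0])$ is $\Z/n\Z$-equivariant because the Liouville flow used to identify a collar of $V$ with the symplectization piece is itself equivariant. Quotienting everything by $\Z/n\Z$ therefore yields that the completion $(M_+^\infty, \omega_+^\infty, J_+^\infty)$ for $\CX(n,m)$ along $V = L(n,m)$ is the $\Z/n\Z$-quotient of the completion for $\CX^2$ along $S^3(r)$ — and $(L(n,m) \times (-\infty,0])$ is just $(S^3(r)\times(-\infty,0])/(\Z/n\Z)$.

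Finally I would push the biholomorphic symplectomorphism $\phi$ of \eqref{Exampemb} through the quotient. Since $\phi((z_1,z_2),t) = (e^{t/2}z_1, e^{t/2}z_2)$ is manifestly equivariant for the diagonal action (it only rescales both coordinates by the same positive real factor, which commutes with multiplication by roots of unity), it descends to an embedding $\bar\phi \colon L(n,m) \times (-\infty,0] \to \CX(n,m)\setminus\{0\}$. By Claim \ref{claimExamp} (applied with radius $r$, using the remark that the argument works for any radius) $\phi$ is a biholomorphic symplectomorphism onto the punctured ball $B(r)\setminus\{0\}$ with its Fubini–Study form and $i$; passing to the quotient, $\bar\phi$ is a biholomorphic symplectomorphism of $(L(n,m)\times(-\infty,0], d(e^t\alpha), J_+^\infty)$ onto $(B(r)\setminus\{0\})/(\Z/n\Z) \subset \CX(n,m)\setminus\{0\}$ with $(\omega_{Std}, i)$. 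Gluing this to the identity on $M_+/(\Z/n\Z) = \CX(n,m)\setminus B(r)$ gives the claimed isomorphism $(M_+^\infty,\omega_+^\infty,J_+^\infty) \cong (\CX(n,m)\setminus\{0\}, \omega_{Std}, i)$.

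The only point requiring care — and the closest thing to an obstacle — is checking that the identifications are genuinely equivariant and hence descend cleanly to the (orbifold) quotient near the fixed point $0$: one must make sure that "adjusted" and the whole completion construction are formulated using only the $\Z/n\Z$-invariant data (the radial $X$, the round $S^3(r)$, the standard $i$), so that no arbitrary non-equivariant choices sneak in. Since every ingredient in subsection \ref{split} and in the proof of Claim \ref{claimExamp} is already visibly equivariant, this is routine, and no genuinely new analysis is needed beyond what is in the $S^3$ case.
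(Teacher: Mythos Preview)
Your proposal is correct and follows exactly the approach the paper intends: the paper states the corollary without a separate proof, merely noting that the contact structure on $S^3(r)$ is $\Z/n\Z$-invariant so the standard structures descend to $\CX(n,m)$ and ``we obtain an analogous result.'' Your argument is just a careful, fully spelled-out version of that equivariance/quotient reasoning, pushing the map $\phi$ of \eqref{Exampemb} through the quotient.
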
 

\subsection{Compactness Theorem} \label{CompThm}

Here we state a version of Theorem 1.6.3 of \cite{EGHsft} adapted to our
situation - see also Theorem 10.6 of \cite{BEHWZsft}. Consider a symplectic
manifold $M$ and an contact hypersurface $V$, with an adjusted almost complex
structure $J$, as in the previous section. For $n \in \Z_{>0}$, let $M^n =
M_-\cup V\times[-n,n] \cup M_+$ be the result of inserting a neck of length 
$2n$. 

\begin{theorem}[\cite{EGHsft},\cite{BEHWZsft}] \label{thmSFT} 
  
  Consider $L \subset M_+$ a Lagrangian submanifold and $u_n: (\D, \del \D)
  \longrightarrow (M^n, L)$ a sequence of stable $J^n$-holomorphic discs in the
  same relative homotopy class (choose $0$ as an interior marked point and $1
  \in \del \D$ a boundary marked point). Then there exists $k \in \Z_{\geq 1}$, 
 such that a subsequence of $u_n$ converges to a
  \emph{stable curve of height $k$}, also known as a \emph{holomorphic building
  of height $\underset{1}{\overset{k-2}{\vee}}$} (height 1 if $k = 2$ or $1$).

\end{theorem}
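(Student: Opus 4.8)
The statement is the SFT compactness theorem of \cite{BEHWZsft} (Theorem 10.6) and \cite{EGHsft} (Theorem 1.6.3), specialized to discs whose boundary lies on a Lagrangian $L$ contained in $M_+$; the plan is to indicate why that machinery applies in the present situation and to isolate the ingredients that need checking. First I would pin down the energy. Inserting the neck $V\times[-n,n]$ does not change relative homotopy classes, so $\pi_2(M^n,L)\cong\pi_2(M,L)$ and the symplectic area $\int_{\D}u_n^*\omega$ is one fixed number, independent of $n$. Following \cite{BEHWZsft} I would pass to the Hofer energy $E(u_n)$, which is the area of the parts of $u_n$ landing in the fixed pieces $M_\pm$ plus $\sup_{\varphi}\int u_n^*d(\varphi\alpha)$ over the neck, and check that it is uniformly bounded; the point here is that $J^n$ is adjusted and translation--invariant on $V\times[-n,n]$, so on the neck $u_n^*d(e^t\alpha)\ge 0$ and inserting the neck contributes no extra area.

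Next I would run Gromov's argument on the \emph{thick part}. On $M_+$ and $M_-$, whose geometry is fixed once and for all, the uniform energy bound together with the standard elliptic estimates yields, after passing to a subsequence, $C^\infty_{loc}$ convergence away from a finite set of points where $|du_n|\to\infty$; at such a point I would rescale to extract a nonconstant finite--energy holomorphic plane, disc, or sphere. Finiteness of the bubbling set, and the impossibility of energy silently vanishing, both come from the monotonicity lemma, applied in the compact pieces (with Lagrangian boundary along $L$) and in the symplectization.

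The real content is the behaviour inside the lengthening neck. Reparametrizing in the $\R$--coordinate, the portions of $u_n$ that remain at bounded distance from a given slice of $V\times[-n,n]$ converge to finite--energy punctured curves in the symplectization $(V\times\R,\,d(e^t\alpha),\,J^\infty)$. Here I would invoke Hofer's asymptotic analysis: each puncture of such a curve is either removable, or the curve is exponentially asymptotic there to a closed (possibly multiply covered) Reeb orbit of $\alpha$. For the lens spaces $L\kl$ of section \ref{abc} the Reeb flow is the one induced by the residual torus/circle action and is Morse--Bott, which is exactly the nondegeneracy hypothesis under which \cite{BEHWZsft} proves this asymptotic convergence, so the step is legitimate. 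Assembling the pieces, the limit is a \emph{holomorphic building}: a bottom level in $M_-^\infty$, a number of symplectization levels in $V\times\R$, and a top level in $M_+^\infty$ that carries the boundary condition on $L$, with adjacent levels matched along their asymptotic Reeb orbits; the total number of levels is the height $k$.

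I expect the main obstacle to be the \emph{no loss of energy} statement, i.e.\ that the sum of the Hofer energies of all levels of the limit equals $\lim_n E(u_n)$ and that nothing escapes into arbitrarily long, thin necks invisible in the limit. In \cite{BEHWZsft} this is handled by combining the monotonicity lemma with a long--cylinder (or long--annulus) estimate bounding the energy of a $J$--holomorphic cylinder of bounded energy and large conformal modulus; it is also exactly this estimate that forces the limit building to be \emph{stable}, with no level consisting solely of trivial cylinders. Granting it, stability together with the standard compactness of the moduli space of stable buildings yields the asserted subsequential convergence.
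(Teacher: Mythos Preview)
The paper does not prove this theorem at all: it is stated with attribution to \cite{EGHsft} and \cite{BEHWZsft} and followed only by a figure and pointers to the relevant sections of those references for the definitions of holomorphic building and convergence. Your proposal instead sketches the actual argument of \cite{BEHWZsft}---uniform Hofer energy bound from the fixed homotopy class and translation--invariance of $J^n$ on the neck, Gromov--type bubbling on the fixed pieces, Hofer's asymptotic analysis at punctures in the symplectization levels (using the Morse--Bott Reeb flow on the lens spaces), and the long--cylinder/no--energy--loss estimate to guarantee stability. That outline is faithful to \cite{BEHWZsft} and more than the paper requires; for the purposes of this paper it suffices to cite the result, since nothing in the subsequent argument depends on the internal mechanics of the compactness proof, only on the existence of the limiting building and its top level in $M_+^\infty$.
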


\begin{figure}[h!]
  \begin{center}
\centerline{\includegraphics[scale=0.7]{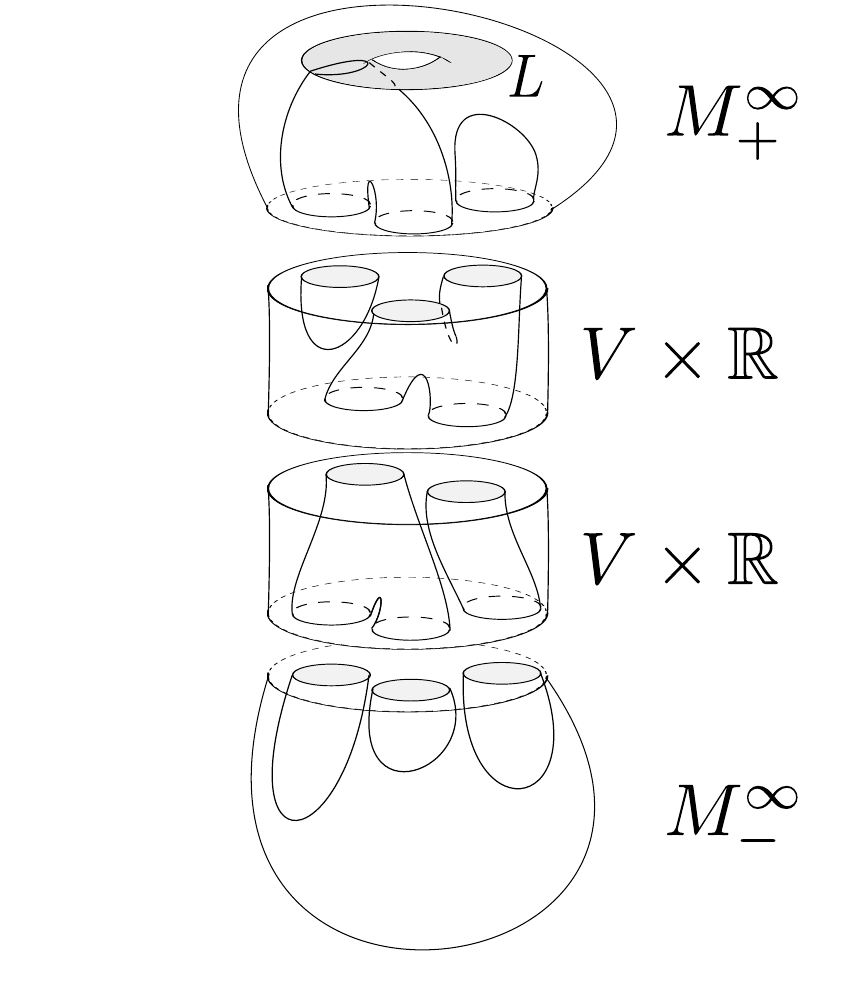}} 
\end{center}
 \caption{A stable curve of height 2 + 2.
  A possible limit of a sequence of  $J^n$-holomorphic discs.}
 \label{figBuildBloc}
\end{figure} 

For a precise definition of \emph{stable curve of height $k$} (\emph{holomorphic
building of height $\underset{1}{\overset{k-2}{\vee}}$}) we refer the reader to
section 1.6 of \cite{EGHsft} (section 9 of \cite{BEHWZsft}) - see also section
4.1 of \cite{WW}. For the notion of convergence, we refer the reader to the end of
section 9.1 of \cite{BEHWZsft}. 

Figure \ref{figBuildBloc} illustrates a typical stable curve
of height 4. It basically consists of a set of $J^\infty$-holomorphic maps from
punctured, possibly disconnected, Riemann surfaces $\Sigma_1, \dots \Sigma_k$ to
$B_1 = M_-^\infty, B_2 = V\times \R, \dots, B_{k-1} = V\times \R, B_k =
M_+^\infty$, that are asymptotic to Reeb orbits at the punctures. We label a
puncture positive/negative if it is asymptotic to a positive/negative end of
$B_i$. A negative puncture of $\Sigma_i$ is associated with a positive puncture
of $\Sigma_{i+1}$, both asymptotic to the same Reeb orbit under the respective
maps. Also, $J^\infty$ is defined in $B_i = V\times \R$, $1 < i < k$, using
translation invariance.

\section{Proof of Theorem \ref{mainthm}} \label{proof}

Before starting the setup for the proof of Theorem \ref{mainthm}, we want to
state some important preliminary results.

\subsection{Preliminary results}

Recall that we want to distinguish the tori by studying Maslov index 2 discs
they bound and applying Theorem 6.4 of \cite{RV}, which follows from the work of
Gromov \cite{Grom} - see also Proposition 4.1 A of \cite{EP}. 

\begin{lemma} (Theorem 6.4 of \cite{RV}) \label{lem_Thm6.4ofRV}
   
Let $L_0$ and $L_1$ be symplectomorphic monotone Lagrangian submanifolds of a
 symplectic manifold $(X, \omega)$, with an almost complex structure $J$ so that
 $(L_0, J)$ and $(L_1,J)$ are regular. Denote by $\varphi: X \rightarrow X$ be a
 symplectomorphism with $\varphi(L_0) = L_1$. Then the algebraic counts of
 Maslov index 2 $J$-holomorphic discs in the classes $\beta \in \pi_2(X,L_0)$ and
 $\varphi_* \beta \in \pi_2(X,L_1)$ are the same. 

\end{lemma}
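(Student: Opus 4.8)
The plan is to recall the standard monotone-Lagrangian disc-counting argument and adapt it to the hypotheses of the lemma. The key object is, for a generic (regular) almost complex structure $J$ and a monotone Lagrangian $L$, the moduli space $\mathcal{M}(\beta, J)$ of $J$-holomorphic discs with one interior and one boundary marked point in a fixed class $\beta \in \pi_2(X,L)$ of Maslov index $2$. Monotonicity together with the Maslov index $2$ condition forces the symplectic area of such discs to be a fixed positive constant, so no bubbling off of holomorphic discs (which would carry positive area and nonnegative even Maslov index) or holomorphic spheres (positive area, $c_1 \geq 1$, hence Maslov $\geq 2$ contribution) can occur in a one-parameter family: any such degeneration would leave a principal component of Maslov index $< 2$, impossible for a nonconstant disc when $L$ is monotone, or of Maslov index $0$, which for a regular $J$ and the correct dimension count is excluded. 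Hence $\mathcal{M}(\beta,J)$, once cut down by requiring the interior marked point to pass through a generic point of $L$ — equivalently, evaluated at the boundary marked point $1 \in \partial\D$ — is a compact oriented $0$-manifold, and its signed count $n_\beta(L,J) \in \Z$ is well defined.

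**Next I would** establish invariance of $n_\beta$ under the symplectomorphism $\varphi$. Push forward the whole structure: $\varphi$ carries $(X,\omega, J, L_0)$ to $(X, \omega, \varphi_*J, L_1)$, and since $\varphi$ is a symplectomorphism it intertwines $J$-holomorphic discs bounding $L_0$ in class $\beta$ with $\varphi_*J$-holomorphic discs bounding $L_1$ in class $\varphi_*\beta$, preserving marked points, Maslov index (symplectomorphisms preserve $c_1$ up to the relevant identifications), monotonicity constant, and orientations. Therefore $n_\beta(L_0, J) = n_{\varphi_*\beta}(L_1, \varphi_*J)$. The remaining point is that the count does not depend on the choice of regular almost complex structure: given two regular choices $J$ and $J'$ for the pair $(L_1,\cdot)$, a generic path $J_t$ joining them gives a compact oriented cobordism between the two cut-down moduli spaces — compactness again uses the monotonicity-plus-Maslov-$2$ a priori area bound to rule out disc and sphere bubbling along the path — so $n_{\varphi_*\beta}(L_1,\varphi_*J) = n_{\varphi_*\beta}(L_1, J)$. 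Combining, $n_\beta(L_0,J) = n_{\varphi_*\beta}(L_1,J)$, which is exactly the assertion.

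**The main obstacle** is the compactness/transversality bookkeeping: one must check carefully that for Maslov index $2$ and $L$ monotone, the only possible boundary strata of the relevant moduli space are genuine nodal degenerations of the domain (which do not affect the count) and never a configuration with a nonconstant disc or sphere bubble — this is where monotonicity is used decisively, and where one also needs that constant disc/sphere bubbles appear in the correct (positive) codimension for a generic $J$ or generic path. All of this is classical (Gromov, and the formulation in Oh's work on Floer theory of monotone Lagrangians), and is precisely the content cited as Theorem 6.4 of \cite{RV} and Proposition 4.1.A of \cite{EP}; I would invoke those rather than reprove the analytic foundations, and simply assemble the invariance statement as above. No new idea beyond the monotone disc-counting invariant is required.
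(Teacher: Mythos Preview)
Your proposal is correct and is precisely the standard Gromov/Oh monotone disc-counting argument that the cited references contain. Note that the paper does not actually give a proof of this lemma: it is simply quoted as Theorem~6.4 of \cite{RV}, with the remark that it follows from Gromov's work \cite{Grom} (see also Proposition~4.1.A of \cite{EP}). So there is no in-paper proof to compare against; your sketch is exactly the argument one finds in those sources --- transport the moduli problem by $\varphi$ to get the tautological identity $n_\beta(L_0,J)=n_{\varphi_*\beta}(L_1,\varphi_*J)$, then run a generic one-parameter cobordism of almost complex structures, using monotonicity plus the Maslov index $2$ area bound to exclude disc and sphere bubbling, to pass from $\varphi_*J$ back to $J$.

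One small wording issue: the phrase ``requiring the interior marked point to pass through a generic point of $L$ --- equivalently, evaluated at the boundary marked point $1\in\partial\D$'' conflates two different cut-down conventions (interior evaluation at a point of $X$ versus boundary evaluation at a point of $L$); either works here, but they are not literally the same constraint. This is cosmetic and does not affect the argument.
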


In particular, we can arrange for new invariants of a monotone Lagrangian submanifold.

\begin{definition}\label{def_BdryConvexHull} 
  
   Let $L$ be a Lagrangian submanifold of a symplectic manifold $X$, endowed with
  an almost complex structure $J$. The \emph{boundary Maslov-2 convex hull} of a
  $L$ is the convex hull in $\pi_1(L)$ generated by the set \{$\del \beta \in
\pi_1(L) \ | \  \beta \in \pi_2(X,L)$, such that the algebraic count of Maslov index 2
$J$-holomorphic discs in the class $\beta$ is non-zero \}.
  
  \end{definition}

As an immediate consequence of Lemma \ref{lem_Thm6.4ofRV} and the commutative diagram

\begin{diagram}
\pi_2(X,L_0) &\rTo^{\varphi_*} & \pi_2(X,L_1)\\
\dTo_{\del} & &\dTo_{\del}\\
\pi_1(L_0) &\rTo^{\varphi_*} & \pi_1(L_1)
\end{diagram} 

we have the following Corollaries.

\begin{corollary} \label{cor_ConvexHull}

Using the same notation as in Lemma \ref{lem_Thm6.4ofRV}, for $L_0$ and
$L_1$ symplectomorphic monotone Lagrangian submanifolds of $X$, the map
$\varphi_*: \pi_1(L_0) \rightarrow \pi_1(L_1)$, sends the boundary Maslov-2
convex hull of $L_0$ to the boundary Maslov-2 convex hull of $L_1$.
   
\end{corollary}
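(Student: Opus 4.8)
The plan is to deduce Corollary~\ref{cor_ConvexHull} as a formal consequence of Lemma~\ref{lem_Thm6.4ofRV} and the commutative square displayed above, the only genuine point of care being that ``convex hull in $\pi_1(L)$'' must be read inside $\pi_1(L)\otimes\R$, where the map induced by $\varphi$ is $\R$-linear and hence carries convex hulls to convex hulls.

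Concretely, I would proceed as follows. First, choose an almost complex structure $J$ for which both $(L_0,J)$ and $(L_1,J)$ are regular; such a $J$ exists since each regularity condition is generic and the intersection of two residual sets is residual. Because $L_0$ and $L_1$ are monotone, the algebraic count of Maslov index~$2$ $J$-holomorphic discs in a given relative class is independent of the chosen regular $J$, so the set appearing in Definition~\ref{def_BdryConvexHull} — and therefore the boundary Maslov-$2$ convex hull — is well defined, and the conclusion of the corollary is meaningful. By Lemma~\ref{lem_Thm6.4ofRV}, for every $\beta\in\pi_2(X,L_0)$ the count in $\beta$ equals the count in $\varphi_*\beta\in\pi_2(X,L_1)$. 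Writing $S_i\subset\pi_1(L_i)$ for the set of boundaries $\del\beta$ of classes $\beta$ with nonzero Maslov-$2$ count, so that the boundary Maslov-$2$ convex hull of $L_i$ is $\mathrm{conv}(S_i)$, I would then argue: $\varphi_*\colon\pi_2(X,L_0)\to\pi_2(X,L_1)$ is a bijection (as $\varphi$ is a diffeomorphism) which by the previous sentence restricts to a bijection between the classes of nonzero count on each side; feeding this through the commutative square relating $\del$ with $\varphi_*$ yields $\varphi_*(S_0)=S_1$. Finally, $\varphi_*\colon\pi_1(L_0)\to\pi_1(L_1)$ is a group isomorphism, hence induces a linear isomorphism $\pi_1(L_0)\otimes\R\to\pi_1(L_1)\otimes\R$, so $\varphi_*(\mathrm{conv}(S_0))=\mathrm{conv}(\varphi_*(S_0))=\mathrm{conv}(S_1)$, which is exactly the assertion.

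I do not expect any real obstacle: the corollary is pure bookkeeping on top of Lemma~\ref{lem_Thm6.4ofRV}. The one place to be slightly careful is the interpretation of the convex hull, since a ``convex hull'' in the abstract group $\pi_1(L)$ only makes sense after tensoring with $\R$ (for the tori in the application $\pi_1(L)\cong\Z^2$ and $\pi_1(L)\otimes\R\cong\R^2$, and the claim reduces to the familiar fact that a lattice automorphism of $\Z^2$ extends to an element of $GL(2,\R)$, which obviously preserves convex hulls). A secondary bookkeeping point is to make sure the count used in Definition~\ref{def_BdryConvexHull} is the $J$-independent monotone invariant, so that the statement is not tied to a particular choice of $J$; this is the content of the invariance already recorded in Lemma~\ref{lem_Thm6.4ofRV}.
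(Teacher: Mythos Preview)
Your argument is correct and is exactly the paper's approach: the corollary is stated as an immediate consequence of Lemma~\ref{lem_Thm6.4ofRV} together with the commutative square relating $\varphi_*$ on $\pi_2$ and $\pi_1$ via $\del$, and your write-up simply makes that implication explicit. The only addition you make beyond the paper is the remark that the convex hull should be read in $\pi_1(L)\otimes\R$, which is a helpful clarification and is consistent with how the paper uses the result (cf.\ Remark~\ref{rem_SLmZ}).
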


\begin{remark} \label{rem_SLmZ}

In particular, if we are given a basis for $\pi_1(L_0)$ and a basis for
$\pi_1(L_1)$, we can see both $\pi_1(L_i)$'s as the standard lattice $\Z^m \subset \R^m$, for
some $m$. Call $\mho_{L_i}$ the image of the boundary Maslov-2 convex hull of
$L_i$, $i = 0,1$. Then Corollary \ref{cor_ConvexHull} says that $\mho_{L_0} = A
\mho_{L_1}$ for some $A \in SL(m,\Z)$.
   
\end{remark}

\begin{remark} \label{rem_NewtonPolSupPotential}
  
In the case of $L$ a Lagrangian in $X$ simply connected symplectic manifold, we
can choose an isomorphism $\pi_2(X,L) \cong \pi_2(X) \oplus \pi_1(L)$ with
classes $\alpha_i \in \pi_2(X,L)$, $i =1,\dots,m$ mapping to a basis for
$\pi_1(L)$. We can then identify $\pi_1(L)$ with the standard lattice $\Z^m
\subset \R^m$. By writing the superpotential of $L$ (see Definition 3.3 of
\cite{DA07} or Definition 2.1 of \cite{RV}) with respect to coordinates
$z_{\alpha_i}$ corresponding to $\alpha_i$ (in the sense of Lemma 2.7 of
\cite{DA07} or equation (2-2) in \cite{RV}), we can identify the boundary
Maslov-2 convex hull of $L$ with the Newton polytope of the superpotential of
$L$.

\end{remark}

We will also make use of a result given by Cho-Poddar in \cite{ChoPod} that
classifies Maslov index 2 of holomorphic discs lying on the smooth part of a toric
orbifold:

\begin{lemma} [Corollary 6.4 of \cite{ChoPod}] \label{lem_ChoPod_Cor}
  
  Let $X$ be a toric orbifold of complex dimension $n$, endowed with its
  standard complex structure, and with moment polytope whose facets are
  orthogonal to the primitive vectors $\{\bv_1, \cdots, \bv_m\}$. Then, up to the
  $T^n$-action, the \emph{smooth} Maslov index 2 holomorphic discs (not passing
  through an orbifold point) are in one-to-one correspondence with the vectors
  $\{ \bv_1, \cdots, \bv_m \}$. 

\end{lemma}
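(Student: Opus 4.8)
I would obtain this as a special case of Corollary 6.4 of \cite{ChoPod}; the plan is to reduce to the well-known computation of Cho and Oh for smooth toric Fano manifolds and then account for the orbifold strata. The starting point is the (orbifold) homogeneous-coordinate presentation $X = (\CX^m \setminus Z)/G$ with $G \subseteq (\CX^*)^m$ a subgroup, chosen so that the toric prime divisors $D_1,\dots,D_m$ — which correspond bijectively to the facets of the moment polytope, hence to $\bv_1,\dots,\bv_m$ — are the images of the coordinate hyperplanes $\{w_i = 0\}$, while every orbifold point lies in the union of the codimension $\ge 2$ toric strata. In this model the Lagrangian torus fiber $L$ lies in the open orbit and lifts to a torus $\{\, w\in\CX^m : |w_i| = c_i \ \text{for all } i \,\}$, the constants $c_i>0$ being determined by the moment-map value.

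First I would lift a given holomorphic disc $u:(\D,\del\D)\to(X,L)$ through the quotient map (possible since $\D$ is simply connected) to $\tilde u = (w_1,\dots,w_m):\D\to\CX^m$; each $w_i:\D\to\CX$ is then holomorphic, and since $u(\del\D)\subset L$ sits in the open orbit we have $|w_i|\equiv c_i$ on $\del\D$ and $w_i\neq 0$ on $\del\D$. By the maximum-modulus principle each $w_i$ is therefore $c_i$ times a unimodular constant times a Blaschke product of some degree $k_i\ge 0$, and $k_i$ equals the intersection multiplicity $u\cdot D_i$. The second step is the Maslov-index identity: for a disc disjoint from the orbifold locus, the orbifold Maslov index of Cho--Poddar satisfies $\mu(u) = 2\sum_{i=1}^m k_i$, because near the generic (smooth) point of each $D_i$ the local picture is the standard toric one, so the relative-Chern-class computation is identical to the manifold case and no age correction appears. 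Imposing $\mu(u)=2$ forces $\sum_i k_i = 1$: exactly one index $i_0$ has $k_{i_0}=1$, and all other $k_i$ vanish. Since a holomorphic function on $\D$ with constant boundary modulus and no interior zero is constant, $w_i$ is a constant of modulus $c_i$ for $i\ne i_0$, while $w_{i_0}$ is $c_{i_0}$ times a single degree-one Blaschke factor. Modulo the residual $T^n$-action (and reparametrization of the domain), this datum is pinned down by the single choice of $i_0$, which gives the claimed bijection; conversely, for each $i_0$ the explicit lift $z\mapsto(\dots,c_i,\dots,c_{i_0}z,\dots)$ descends to a holomorphic Maslov-$2$ disc in $(X,L)$ that meets $D_{i_0}$ once, at a smooth point, and otherwise stays in the open orbit.

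The step I expect to cost the most is the orbifold Maslov-index bookkeeping: one has to set up Cho--Poddar's definition of the Maslov index of an orbifold disc (where interior marked points mapping to orbifold points carry a degree-shifting correction) and check that for a disc avoiding the orbifold points it collapses to $2\sum_i (u\cdot D_i)$ — this is exactly where the hypothesis \emph{not passing through an orbifold point} is used, and what rules out the fractional contributions that would otherwise occur. A more routine point is to make ``up to the $T^n$-action'' precise: after fixing marked points to rigidify the domain, $T^n$ acts on the set of such discs with quotient a set of $m$ points, one for each $\bv_i$. (Fredholm regularity of these discs for the standard complex structure, which is what makes the algebraic counts in Definition \ref{def_BdryConvexHull} well defined, follows along the same lines as in the smooth toric case.)
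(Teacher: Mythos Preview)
The paper does not prove this lemma at all: it is simply quoted as Corollary~6.4 of \cite{ChoPod}, with no argument given. Your sketch is a faithful outline of the Cho--Poddar proof (which in turn extends the Cho--Oh argument for smooth toric manifolds), so in that sense your approach agrees with the literature the paper defers to.

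One technical point worth tightening: you say the disc lifts through the quotient map ``since $\D$ is simply connected''. The quotient $\CX^m\setminus Z \to X$ is a principal $G$-bundle with $G\subset(\CX^*)^m$ possibly of positive dimension, so this is not a covering-space lift; the correct justification is that the pullback bundle over $\D$ is trivial because $\D$ is contractible, hence admits a section. This is minor, and the rest of your argument --- the Blaschke factorization, the identity $\mu(u)=2\sum_i k_i$ for discs avoiding the orbifold locus, and the resulting classification --- is correct and is exactly what Cho--Poddar do.
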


\subsection{The setup}

Consider the standard toric fibration of the orbifold $\CP\abc$, for $(a,b,c)$ a
Markov triple, represented by the left picture of Figure \ref{figABC}. We
proceed as in Corollary \ref{CorTransCut}. Perform a rational blowdown - see
Figure \ref{figRatBD} - on a neighbourhood of each vertex bounded by lens spaces
of the form $L(a^2,al_1 -1), L(b^2, bl_2 -1), L(c^2,cl_3 - 1)$ - see Proposition
\ref{mili}. Also, assume that the neighbourhoods are the quotient of balls of
some small radius in the standard coordinate chart centered in the respective
vertex (of the form $\CX(k^2,k l -1)$ for $k = a ,b ,c$), as in the paragraph
before the Corollary \ref{corExamp}. That way, we obtain an almost toric
fibration of $\CP^2$ containing $T\abc$ as a monotone fiber, as depicted in the
left base diagram of Figure \ref{figProofABC}.

\begin{figure}[h!]
  \begin{center}
\centerline{\includegraphics[scale=0.85]{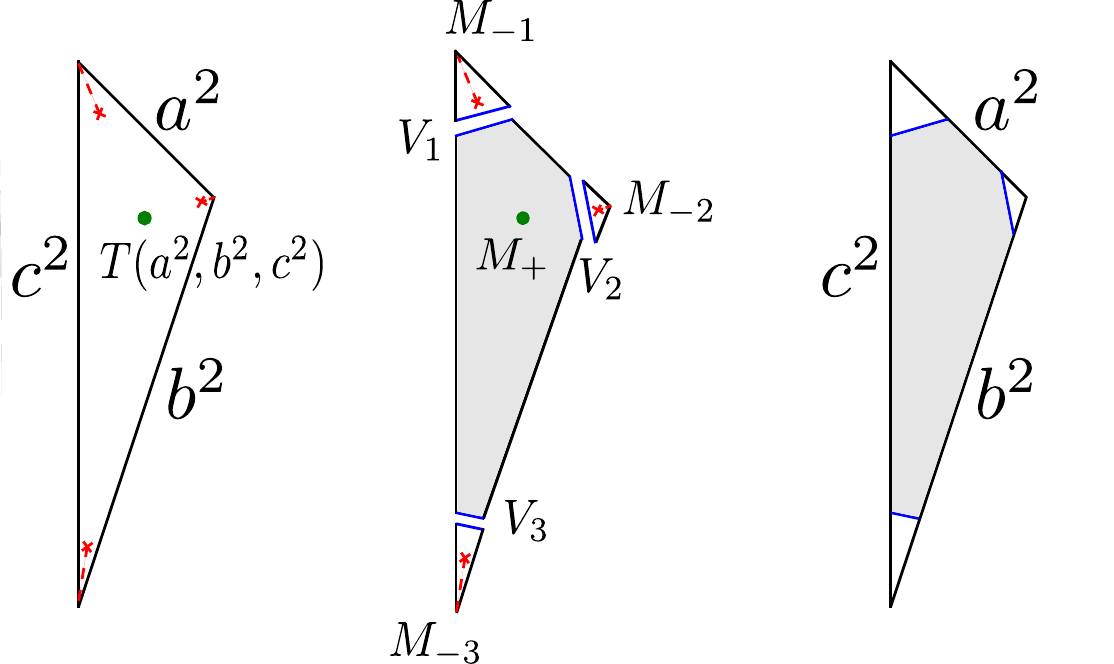}} 
 \caption{On the left, an almost toric fibration with $T\abc$ as the central fiber. 
 The middle diagram shows how the contact hypersurface $V = V_1 \cup V_2 \cup V_3$
 separates $\CP^2$ in $M_+$ and $M_- = M_{-1} \cup M_{-2} \cup M_{-3}$. The right most 
 diagram shows $M_+$ symplectically embedded into $\CP\abc$.}
\label{figProofABC}
\end{center}
\end{figure} 

Take $V = V_1 \cup V_2 \cup V_3$ the disconnected hypersurface in $\CP^2$ given
by the union of the three lens spaces $V_1 = L(a^2,al_1 -1), V_2 = L(b^2, bl_2
-1), V_3 = L(c^2,cl_3 - 1)$, used for the symplectic rational blowdown. So $V$
divides $\CP^2$ in four connected components, which we name $M_+ \cup M_{-1} \cup
M_{-2} \cup M_{-3}$, as in the middle diagram of Figure \ref{figProofABC}.
Consider the symplectic embedding of $M_+$ 
into $\CP\abc$. 
We take the contact structure in $V$ and the almost complex structure $J$ on 
$M_+$ 
coming from the standard ones of $\CP\abc$. They are adjusted in the sense given in
section \ref{ACS} (we need to take $X$ a multiple of the Liouville vector field)
- see example \ref{Examp}. 
Since the space of compatible almost complex structures is
contractible, we have no obstruction to extend $J$ defined on $M_+$ 
to $\CP^2$. Hence, we are in shape for applying neck-stretching for
$M = \CP^2$, $V$ and $M_+$ as above, $M_- = M_{-1} \cup M_{-2} \cup M_{-3}$,
and $J$, such that, restricted to $M_+$, is given by the pullback of the 
standard complex structure in $\CP\abc$ via the embedding $M_+ \hookrightarrow \CP\abc$.

\begin{lemma} \label{lemM_+^infty}
 
 For $M$, $V$ and $J$ as above, we have that $(M_+^\infty, \omega_+^\infty,
 J_+^\infty)$ is a K\"ahler manifold isomorphic to $(\CP\abc \setminus \{p_1,
 p_2, p_3\},$ $ \omega_{std}, i)$, where $p_1, p_2, p_3$ are the pre-images of
 the vertices of the moment polytope under the standard moment map. 
 
 \end{lemma}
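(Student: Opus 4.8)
The plan is to reduce this to the already-established local model in Corollary \ref{corExamp}, applied three times. Recall the geometric picture: $\CP\abc$ is obtained from $\CP^2$ by performing three rational blowdowns at the preimages $p_1,p_2,p_3$ of the vertices of the moment polytope, each blowdown replacing a neighbourhood of $p_j$ — which near $p_j$ is modelled on a ball quotient $B(r_j)/(\Z/k_j\Z)$ inside the affine chart $\CX(k_j^2, k_jl_j - 1)$ — by a rational ball with the same lens-space boundary $V_j$. By our setup, $M_+$ is precisely the complement in $\CP^2$ of these three rational balls, and the symplectic embedding $M_+ \hookrightarrow \CP\abc$ realizes $M_+$ as $\CP\abc$ minus three small orbifold balls centered at $p_1,p_2,p_3$. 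The almost complex structure $J$ on $M_+$ and the contact structure on $V = V_1 \cup V_2 \cup V_3$ are by construction the restrictions of the standard K\"ahler data on $\CP\abc$, and we checked in Example \ref{Examp} that near each $V_j$ this data is adjusted for the splitting, with $X$ taken to be the appropriate multiple of the radial Liouville field in the chart $\CX(k_j^2, k_jl_j-1)$.

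First I would set up the completion: by \eqref{m+inf}, $M_+^\infty = M_+ \cup (V\times(-\infty,0], d(e^t\alpha))$, where the cylindrical end is the disjoint union of three pieces $V_j \times (-\infty,0]$. The claim is that the K\"ahler manifold $(M_+^\infty,\omega_+^\infty,J_+^\infty)$ is biholomorphically symplectomorphic to $(\CP\abc\setminus\{p_1,p_2,p_3\},\omega_{std},i)$. To prove this, I would build the isomorphism piece by piece. On the compact part $M_+$ the map is just the given embedding $M_+\hookrightarrow \CP\abc$. On the $j$-th cylindrical end I would use exactly the map $\phi$ of Corollary \ref{corExamp}: the embedding $V_j\times(-\infty,0]\to \CX(k_j^2,k_jl_j-1)\setminus\{0\}$ descending from \eqref{Exampemb}, which identifies $(V_j\times(-\infty,0], d(e^t\alpha), J_+^\infty)$ with a punctured orbifold ball $(B(r_j)/(\Z/k_j\Z)\setminus\{p_j\}, \omega_{std}, i)$. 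These punctured balls sit inside $\CP\abc$ as neighbourhoods of the $p_j$, so gluing the three end-maps to the embedding of $M_+$ along $V_1,V_2,V_3$ produces a well-defined smooth map $M_+^\infty \to \CP\abc\setminus\{p_1,p_2,p_3\}$. It is a bijection because the three punctured balls together with $M_+$ exhaust $\CP\abc\setminus\{p_1,p_2,p_3\}$ and overlap only along the $V_j$. By Corollary \ref{corExamp} it intertwines $J_+^\infty$ with $i$ and $\omega_+^\infty$ with $\omega_{std}$ on each cylindrical piece, and it does so trivially on $M_+$; since these agree on the overlaps $V_j$, the glued map is the desired biholomorphic symplectomorphism.

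The only real point requiring care — and the step I expect to be the main obstacle — is checking that the end-maps and the embedding of $M_+$ genuinely match up along the hypersurfaces $V_j$ to give a \emph{smooth} (not merely continuous) isomorphism, i.e.\ that the collar coordinate $t$ near $V_j$ in the completion $M_+^\infty$ is compatible, under the embedding $M_+\hookrightarrow\CP\abc$, with the radial coordinate on the orbifold chart; this is exactly the statement that $J$ (hence the Liouville field used to build the neck) is adjusted, which we have arranged, so $t\mapsto$ radius via $t\leftrightarrow 2e^{t/2}$ as in \eqref{Exampemb}. One also needs to note that the $\Z/k_j\Z$-quotient respects all structures, as already observed in the paragraph preceding Corollary \ref{corExamp}. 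With these observations the verification is the quotiented version of the computation in the proof of Claim \ref{claimExamp}, and I would simply cite Corollary \ref{corExamp} rather than repeat it.
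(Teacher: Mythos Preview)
Your proposal is correct and follows exactly the paper's approach: the paper's proof is a one-line citation of Corollary \ref{corExamp}, and you have simply unpacked how that corollary applies at each of the three ends and how the pieces glue. One minor slip: in your first paragraph you write that $\CP\abc$ is obtained from $\CP^2$ by rational blowdown, but the direction is the reverse (rational blowdown at the orbifold points of $\CP\abc$ yields $\CP^2$); your subsequent description of $M_+$ and the embedding $M_+\hookrightarrow\CP\abc$ is correct, so this does not affect the argument.
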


\begin{proof}
  Follows from Corollary \ref{corExamp}.
\end{proof}

\subsection{The proof}

We proceed to the proof of

\begin{theorem*}[\ref{mainthm}]

If $(a,b,c)$ and $(d,e,f)$ are two distinct Markov triples then the monotone
Lagrangian tori $T(a^2, b^2, c^2)$ and $T(d^2, e^2, f^2)$ are not Hamiltonian
isotopic.

\end{theorem*}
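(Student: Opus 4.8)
The plan is to compute, for each Markov triple $(a,b,c)$, the \emph{boundary Maslov-2 convex hull} of $T\abc$ (Definition \ref{def_BdryConvexHull}) and show it is a triangle whose shape depends on $(a,b,c)$ in a way that distinguishes distinct triples. By Corollary \ref{cor_ConvexHull} and Remark \ref{rem_SLmZ}, the convex hull is a Hamiltonian isotopy invariant up to the $SL(2,\Z)$-action on $\pi_1(T\abc)\cong\Z^2$; so it suffices to produce, for each triple, an $SL(2,\Z)$-invariant extracted from this triangle — for instance the (unordered) triple of ``edge vectors'' up to $SL(2,\Z)$, or equivalently the affine-lengths of the three sides together with the lattice area — and to verify that these invariants take different values on different Markov triples.

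First I would set up the neck-stretching of $\CP^2$ along $V=V_1\cup V_2\cup V_3$ described in the setup, and analyze what a Maslov index 2 holomorphic disc bounded by $T\abc$ degenerates to under the compactness theorem (Theorem \ref{thmSFT}). By Lemma \ref{lemM_+^infty}, the top-level piece lives in $(\CP\abc\setminus\{p_1,p_2,p_3\},\omega_{std},i)$, so I would apply the Cho--Poddar classification (Lemma \ref{lem_ChoPod_Cor}) to see that a smooth Maslov-2 disc in $\CP\abc$ not hitting the orbifold points corresponds to one of the three primitive edge-conormals $\bu_1,\bu_2,\bu_3$. The key geometric input is that the relative class $\beta$ of the original disc in $\pi_2(\CP^2,T\abc)$ is recovered by capping off the top-level curve with the lower-level curves in the symplectization pieces $V_i\times\R$ and in $M_{-i}^\infty$, and that these lower pieces contribute boundary classes in $\pi_1(T\abc)$ that are constrained by the Reeb dynamics on the lens spaces $L(k^2,kl-1)$. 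Tracking how the monodromy/cut data of the almost toric fibration (the vectors $\bw_1,\bw_2,\bw_3$ from the Remark after Proposition \ref{mili}) enter, I would identify the three ``extreme'' classes whose boundaries are $\bw_1,\bw_2,\bw_3$ (suitably normalized), and argue that the algebraic count in each such class is nonzero while every class contributing a nonzero count has boundary inside their convex hull. This pins down $\mho_{T\abc}$ as the triangle with vertices corresponding to $\bw_1,\bw_2,\bw_3$.

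Next I would make the combinatorial comparison. Using $a\bw_1+b\bw_2+c\bw_3=0$ and the explicit forms $\bw_1=-(a,l_2)$, $\bw_2=(-b,l_1)$, $\bw_3$ determined by the relations in the Remark, I would compute the pairwise lattice determinants $\det(\bw_i,\bw_j)$ — these are the quantities like $ab$, $bc$, $ca$ up to the Markov relation — and extract an $SL(2,\Z)$-invariant of the triangle, e.g. the multiset $\{bc,ca,ab\}$ (equivalently the areas of the three sub-triangles into which the barycenter divides it, which are proportional to $a^2,b^2,c^2$). Since a Markov triple is determined up to permutation by the multiset $\{a^2,b^2,c^2\}$ — and the mutation tree is well understood, distinct unordered triples giving distinct multisets (this is where one invokes uniqueness in the Markov tree, modulo the still-open but here irrelevant uniqueness conjecture: distinct triples genuinely have distinct entry-multisets) — different Markov triples yield non-$SL(2,\Z)$-equivalent convex hulls, hence by Corollary \ref{cor_ConvexHull} the tori are not Hamiltonian isotopic.

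The main obstacle I anticipate is the middle step: rigorously controlling the degeneration under neck-stretching, i.e. ruling out ``bad'' holomorphic buildings (multiple covers, bubbling into the symplectization levels, or curves in $M_{-i}^\infty$ carrying unexpected homology) and proving that the algebraic count in each extremal class is genuinely nonzero rather than cancelling. This requires transversality/regularity for the split almost complex structure, an index computation showing the relevant buildings are rigid, and a Fredholm/gluing argument (or a direct identification with discs in the toric orbifold $\CP\abc$ plus rational-ball corrections) to evaluate the counts. Establishing that $\mho_{T\abc}$ is exactly the claimed triangle — both the inclusion ``no disc class escapes the triangle'' (an area/energy estimate from the stretching) and ``the three vertices are realized'' (an existence-with-nonzero-count statement) — is the technical heart of the argument; the rest is affine-lattice bookkeeping.
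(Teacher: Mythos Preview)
Your overall plan---neck-stretch along the three lens spaces, use Cho--Poddar on the top level, identify the boundary Maslov-2 convex hull as a lattice triangle, and compare triangles via an $SL(2,\Z)$-invariant such as the multiset of edge affine-lengths---is exactly the paper's strategy. The substantive difference is in how the hard step (``no disc class escapes the triangle'') is carried out. You propose to track Reeb asymptotics on $L(k^2,kl-1)$, control the symplectization levels and the pieces in $M_{-i}^\infty$, and run an index/gluing argument; you rightly flag this as the main obstacle. The paper sidesteps all of that with a single observation: by Lemma~\ref{lemM_+^infty}, $(M_+^\infty,J_+^\infty)$ is biholomorphic to $\CP\abc\setminus\{p_1,p_2,p_3\}$ with its \emph{standard} complex structure, so the top-level limit compactifies to an honest holomorphic curve in the orbifold $\CP\abc$, and Chen's orbifold positivity of intersection with the three toric divisors $D_1,D_2,D_3$ forces its class into the cone generated by the three Cho--Poddar discs $\alpha,\beta,\gamma$. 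The lower levels of the building are never analyzed; only $[\partial u]=[\partial u_+^\infty]$ is used. The same positivity trick also gives the ``vertices are realized with count $\pm 1$'' statement (Lemma~\ref{lem_algcount_alpbetgam}): an extra disc in $[\alpha]$ would, after stretching and compactifying, meet two of the $D_i$ positively, contradicting the intersection profile of $\alpha$.

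One smaller correction: the vertices of the triangle are $[\partial\alpha],[\partial\beta],[\partial\gamma]$, i.e.\ the primitive inward facet normals $\bv_i$ of the $\CP\abc$ polytope (the vectors in Lemma~\ref{lem_ChoPod_Cor}), not the cut/eigenray directions $\bw_i$. The relevant linear relation is $a^2\bv_1+b^2\bv_2+c^2\bv_3=0$ (completeness of the fan), from which one reads off that the edge affine-lengths of the triangle are $a,b,c$; this is the $SL(2,\Z)$-invariant the paper uses to finish.
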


\begin{proof}

We recall that, for a monotone torus, Maslov index 2 discs have the same area.
For convenience, we normalize the symplectic forms of $M = \CP^2$ and $\CP\abc$,
so that the symplectic area of Maslov index 2 discs is 1. 

\begin{figure}[h!]
  \begin{center}
\centerline{\includegraphics[scale=1.2]{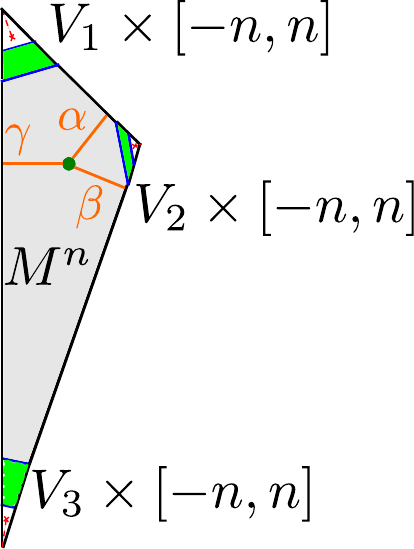}} 
 \caption{Picture of an almost toric diagram for $M^n$, which is $\CP^2$ after
we inserted three necks of length $2n$ along $V_1, V_2, V_3$. The dot in the
middle of the diagram represents $T\abc$. The segments meeting at the dot are
the images of pseudo-holomorphic discs in $M^n$ bounded by $T\abc$.}
\label{figDiscsM+}
\end{center}
\end{figure} 

Now we note that, by our embedding of $M_+$ (symplectic and holomorphic) into
$\CP\abc$, the image of $T\abc$ is the standard monotone torus in
$\CP\abc$. Hence it bounds 3 one-parameter family of Maslov index 2 holomorphic
discs, coming from the ones in $\CP\abc$ that do not pass through an orbifold
point (assuming we took a small enough neighbourhood for the rational blowdown)
- see Figure \ref{figDiscsM+} and Lemma \ref{lem_ChoPod_Cor}. By Proposition
8.3 of \cite{ChoPod}, all discs of the above mentioned families are regular. We
choose one holomorphic disc in each of the above one-parameter families. We
write $\alpha, \beta, \gamma$ for the pullbacks of these three discs to $M_+
\subset \CP^2$. 

Insert necks of length $2n$ along $V_1$, $V_2$, $V_3$ as in described in section
\ref{split}, obtaining $(M^n, e^{-n}\omega^n, J^n)$.



  

Consider symplectomorphisms $\phi^n: M \longrightarrow M^n$, leaving $M_{+}
\setminus \mathcal{N}$ invariant (recall that we identify $M_{+} \subset M$ with
$M_{+} \subset M^n$), where $\mathcal{N}$ is a small neighbourhood of $V$ of the
form $\mathcal{N} \cong V_1\times(-\epsilon,\epsilon) \cup
V_2\times(-\epsilon,\epsilon)\cup V_3\times(-\epsilon,\epsilon)$. We also refer
to $M_- \subset M^n$, but note that the symplectic form defined on $M_- \subset
M^n$ differs by a factor of $e^{-2n}$ from the one defined on $M_- \subset M$ -
see the definition of $M^\tau$ in section \ref{split}. Assume also that
$\mathcal{N}$ is disjoint from $\alpha \cup \beta \cup \gamma$ (and the other
discs that came from the embedding $M_{+}\hookrightarrow \CP\abc$). 

The following Lemma (for $n=0$) says that if a $J$-holomorphic disc is not in the 
classes $[\alpha]$, $[\beta]$ or $[\gamma]$, then its image must go through 
$V = V_1 \cup V_2 \cup V_3$.

\begin{lemma} \label{lem_alphabetagamma}

Suppose that $u^n: (\D, \del \D) \longrightarrow (M^n, T\abc)$ is a Maslov index
2 J-holomorphic disc whose image does not intersect $M_- \setminus \mathcal{N}
\subset M^n$. Then it is a disc in one of the one-parameter families that
contains either $\alpha$, $\beta$ or $\gamma$.
   
\end{lemma}

\begin{proof}
   
 Consider the contact hypersurface $V \times \{-\epsilon\}$ (which is the
 boundary of $M_- \setminus \mathcal{N} \subset M^n$) inside
 $(M^n,e^{-n}\omega^n,J^n)$ and stretch the neck. We avoid developing all the
 notation for this particular neck-stretching, but let's call the `$+$'
 component of the neck stretching in the limit by $(M_+^{n + \infty},
 \omega_+^{n + \infty}, J_+^{n + \infty})$. Since the image of $u^n$ does not
 intersect $M_- \setminus \mathcal{N} \subset M^n$, we can identify $u^n$ with
 the limit $u^{n + \infty}_+: (\D, \del \D) \longrightarrow (M^\infty_+,
 T\abc)$, a $J^{n+\infty}_+$-holomorphic disc. As in Lemma \ref{lemM_+^infty},
 we are able to conclude that $(M_+^{n + \infty}, \omega_+^{n + \infty}, J_+^{n
 + \infty}) \cong (M_+^\infty, \omega_+^\infty, J_+^\infty) \cong (\CP\abc
 \setminus \{p_1, p_2, p_3\}$. Therefore, we can identify $u^n$ with an
 holomorphic disc in $\CP\abc$ lying in the smooth part. The result follows from
 Lemma \ref{lem_ChoPod_Cor}.
   
\end{proof}

By Lemma \ref{lem_alphabetagamma} for $n=0$, the discs contained in the
complement of $M_- \subset M$ are all regular. So, in order to obtain
transversality we only need to perturb $J$ on $M_-$. Therefore, we can take a
regular $J$, still having the property that, restricted to $M_+$, $J$ is given
by the pullback of the standard complex structure in $\CP\abc$, via the
embedding $M_+ \hookrightarrow \CP\abc$.

Now consider a $J$-holomorphic disc $u$ of symplectic area 1. Set $[u] =
[\text{Im}(u)] \in \pi_2(\CP^2,T\abc)$, and $[\del u] \in \pi_1(T\abc)$, the class of the
boundary of $u$.

Take the isomorphism 

\begin{equation}\label{eqHomotIso}
\psi = \pi_2(\CP^2, T\abc) \overset{\cong}{\longrightarrow} \pi_2(\CP_2) \oplus
\pi_1(T\abc)
\end{equation}
  
that maps $\beta \to (0,\del \beta)$ and $\gamma \to (0 ,\del \gamma)$. Via this
identification, we completely determine the class of a disc of symplectic area 1
by its projection to the second factor, i.e., by its boundary. 
 
\begin{lemma} \label{lemma_ConvexHull}
  
Assume that the algebraic count of $J$-holomorphic discs in $[u]$ is non-zero.
Then, the class $[\del u]$ lies in the convex hull generated by $[\del \alpha]$,
$[\del \beta]$, $[\del \gamma]$ in $\pi_1(T\abc) \cong \Z^2$.
  
\end{lemma}

\begin{proof}
      
      We first observe that there is a $(\phi^n)^*J^n$-holomorphic disc
      $\tilde{u}^n$ representing $[u]$: if $(\phi^n)^*J^n$ is regular then,
      since the algebraic count of discs is nonzero, this follows from Lemma
      \ref{lem_Thm6.4ofRV}; if it is not regular then there must be a disc or
      else the condition of regularity is vacuously true. Therefore, $u^n =
      \phi^n\circ\tilde{u}^n: (\D, \del \D) \longrightarrow (M^n, T\abc)$ is
      $J^n$-holomorphic. 
 
 By Theorem \ref{CompThm}, there exists a subsequence that converges to a stable
 curve of height $k$, for some $k \geq 1$. In particular, it gives a
 $J_+^\infty$-holomorphic map $\uu : \Sigma \longrightarrow M_+^\infty$, where
 $\Sigma$ is a (possibly disconnected) punctured Riemann surface with boundary
 that consists of a circle mapped by $\uu$ to the limit of $T\abc$, which we
 call $L\abc$. One component of $\Sigma$ is a punctured disc, while the others,
 if any exist, are punctured spheres, because they cannot have positive genus. 
 
 By Lemma \ref{lemM_+^infty}, we have that
 $(M_+^\infty, \omega_+^\infty,$ $ J_+^\infty)$ is a K\"ahler manifold
 isomorphic to $(\CP\abc \setminus \{p_1, p_2, p_3\},$ $ \omega_{std}, i)$,
 where $p_1, p_2, p_3$ are the pre-images of the vertices of the moment polytope
 under the standard moment map.
  
 Hence, we can compactify $(M_+^\infty, \omega_+^\infty, J_+^\infty)$ to
 $(\CP\abc, \omega_{std}, i)$. We also extend $\uu$ to the (possibly
 disconnected) Riemann surface $\bar{\Sigma}$ as an holomorphic map in the sense
 of Definition 2.1.3 of \cite{CR} - see also Definition A of \cite{WC}.
 Topologically, we can see this map defining a class on $\pi_2(\CP\abc,L\abc)$,
 which we call $[\uu]$, since all the components of $\bar{\Sigma}$ that are not
 the disc are spheres (topologically, we could think the domain of the
 compactification of $\uu$ consists of chains of spheres attached to one disc).
 Moreover, under the identification of $T\abc \subset \CP^2$ with $L\abc \subset 
 \CP\abc$, we have that $\del[\uu] = \del[u]$. 
 
 \begin{remark} \label{rmkalpbetgamInv}
 
  Note that the discs $\alpha, \beta, \gamma$ and the symplectic form in $M_+
\subset M^n$, for all $n = 0, 1, 2, \dots, \infty$, remain invariant. We keep
calling $\alpha, \beta, \gamma$ their own limit in $\CP\abc$.  
   
 \end{remark}

 
  

 


Call $D_1$, $D_2$, $D_3$ the inverse images of the (closed) edges of the moment
polytope of $\CP\abc$, which are (pseudo)-holomorphic curves in the sense of
Definition A of \cite{WC} (essentially the same of Definition 2.1.3 of
\cite{CR}). Also, by Definition A of \cite{WC}, the image of $\uu$ is a
(pseudo)-holomorphic curve. We will abuse notation and say that $\uu$ is a 
holomorphic curve in $\CP\abc$.

In Definition B of \cite{WC}, Chen gives a notion of algebraic intersection
number (which is a rational number) for pseudo-holomorphic curves in an
orbifold. We note that, up to relabeling, the intersection number of $\alpha,
\beta, \gamma$ with $D_1$, $D_2$, $D_3$ is $(1,0,0), (0,1,0)$ and $(0,0,1)$,
respectively. Since $\alpha$, $\beta$ and $\gamma$ generate $\pi_2(\CP\abc,
L\abc)$, the classes that have positive intersection with $D_1$, $D_2$, $D_3$
lie in the cone generated by $\alpha$, $\beta$ and $\gamma$. Moreover, this cone
projects via $\del: \pi_2(\CP\abc, L\abc) \rightarrow \pi_1(L\abc)$, to the
convex hull generated by $\del \alpha$, $\del \beta$, $\del \gamma$. To prove
Lemma \ref{lemma_ConvexHull}, it therefore suffices to prove that $\uu$
intersects $D_1$, $D_2$, $D_3$ positively.

There is an intersection formula given by Chen on Theorem 3.2 of \cite{WC}. In
particular, this formula implies that the algebraic intersection number of two
pseudo-holomorphic curves in an orbifold is positive. Since $D_1$, $D_2$, $D_3$
and $\uu$ are holomorphic, their algebraic intersection number is positive. This
implies Lemma \ref{lemma_ConvexHull}, recalling that $[\del \uu] = [\del u]$
under the identification $T\abc \cong L\abc$.
  
\end{proof}

\begin{lemma} \label{lem_algcount_alpbetgam}
  
The algebraic count of discs in the classes $[\alpha]$, $[\beta]$ and $[\gamma]$
in $\pi_2(M,T\abc)$ is $\pm 1$.
  
\end{lemma}

\begin{proof}
   
 Suppose that the algebraic count of discs in the class $[\alpha]$ is not $\pm
 1$. Because the family of $\alpha$ counts with $\pm 1$ in all $M^n$ and the
 discs on that family are regular with respect to $J^n$, by Lemma
 \ref{lem_alphabetagamma}, there must be $J^n$-holomorphic discs $u^n$ in the
 class $[\alpha] \in \pi_2(M^n,T\abc)$ intersecting $M_- \subset M^n$, for all
 $n$ (either to make $J^n$ irregular or by Lemma \ref{lem_Thm6.4ofRV}). Taking a
 limit of a subsequence, and compactifying $M_+^\infty$ to $\CP\abc$ as in the
 proof of Lemma \ref{lemma_ConvexHull}, we end up with a disc in the class
 $[\alpha] \in \pi_2(\CP\abc, L\abc)$ intersecting positively two of the complex
 curves $D_1$, $D_2$, $D_3$ (inverse images of the closed edges of the moment
 polytope of $\CP\abc$). Contradiction. Clearly, the same argument works for
 classes $[\beta]$ and $[\gamma]$.
 
\end{proof}

We have the following as an immediate corollary of Lemma \ref{lemma_ConvexHull} and Lemma 
\ref{lem_algcount_alpbetgam}:

\begin{corollary}

The \emph{boundary Maslov 2 convex hull} of $T\abc$ (Definition
\ref{def_BdryConvexHull}) is generated by $[\del \alpha]$, $[\del \beta]$ and
$[\del \gamma]$. 
   
\end{corollary}

We can see that the boundary Maslov 2 convex hull of $T\abc$ can be identified
with the polytope dual to the moment polytope of $\CP\abc$. The reader can check
that the affine lengths of the edges of the convex hull associated to $T\abc$
are $a$, $b$ and $c$. Then the Theorem \ref{mainthm} follows from Corollary
\ref{cor_ConvexHull} (which says that the boundary Maslov 2 convex hull is an
invariant among monotone Lagrangians submanifolds). More precisely, first we
choose our favourite basis for $\pi_1(T\abc)$ and $\pi_1(T(d^2,e^2,f^2)$ and see
their respective boundary Maslov 2 convex hull inside $\Z^2$. After checking
that the affine lengths of the edges are, respectively, $\{a,b,c\}$ and
$\{d,e,f\}$, we conclude that, if $\{a,b,c\} \ne \{d,e,f\}$, then $T\abc$ and
$T(d^2,e^2,f^2)$ are not symplectomorphic. This follows from Remark
\ref{rem_SLmZ}, because their boundary Maslov 2 convex hull inside $\Z^2$ are
not related via an $SL(2,\Z)$ action (note that $SL(2,\Z)$ preserves affine
length).

\end{proof}

\end{document}